\definecolor{Gray}{gray}{0.80}
\definecolor{LightGray}{gray}{0.90}
\newcommand{\cA}{\mathcal{A}}
\newcommand{\cC}{\mathcal{C}}
\newcommand{\cD}{\mathcal{D}}
\newcommand{\cH}{\mathcal{H}}
\newcommand{\cL}{\mathcal{L}}
\newcommand{\cP}{\mathcal{P}}
\newcommand{\cX}{\mathcal{X}}
\newcommand{\bE}{\mathbb{E}}
\newcommand{\bN}{\mathbb{N}}
\newcommand{\bQ}{\mathbb{Q}}
\newcommand{\bR}{\mathbb{R}}
\newcommand{\bfH}{\mathbf{H}}
\newcommand{\bfR}{\mathbf{R}}
\newcommand{\bfV}{\mathbf{V}}
\newcommand{\PR}{\mathbb{P}}
\newcommand{\bONE}{\mathbbm{1}}
\newcommand{\dd}{ \mathrm{d}}
\DeclareMathOperator*{\LIM}{LIM}
\renewcommand{\epsilon}{\varepsilon}
\newcommand{\vn}[1]{\left| \! \left| #1\right| \! \right|}
\numberwithin{equation}{section}
\newtheorem{theorem}{Theorem}[section]
\newtheorem{lemma}[theorem]{Lemma}
\newtheorem{proposition}[theorem]{Proposition}
\theoremstyle{definition}
\newtheorem{definition}[theorem]{Definition}
\newtheorem{remark}[theorem]{Remark}
\newtheorem{assumption}[theorem]{Assumption}
\newtheorem{example}[theorem]{Example}
\title{Flux large deviations of weakly interacting jump processes via well-posedness of an associated Hamilton-Jacobi equation}
\author{Richard C. Kraaij\thanks{Delft Institute of Applied Mathematics, Delft University of Technology, Van Mourik Broekmanweg 6, 2628 XE Delft, The Netherlands. \emph{E-mail address}: r.c.kraaij@tudelft.nl}}
\date{\today}
\begin{document}

\maketitle

\begin{abstract}
	We establish uniqueness for a class of first-order Hamilton-Jacobi equations with Hamiltonians that arise from the large deviations of the empirical measure and empirical flux pair of weakly interacting Markov jump processes. As a corollary we obtain such a large deviation principle in the context of weakly interacting processes with time-periodic rates in which the period-length converges to $0$. \\
	
\noindent \emph{Keywords: Hamilton-Jacobi equation; \and Large deviations; \and weakly interacting jump processes; \and empirical measure; \and empirical flux} \\

\noindent \emph{MSC2010 classification: 49L25; 60F10; 60J75} 
\end{abstract}

%


\section{Introduction}

Systems of interacting Markov jump processes appear in various contexts, e.g. in statistical physics, kinetic theory, queuing systems and communication networks. A first natural goal is to understand the limiting behaviour of appropriate observables as the number of components goes to infinity. An extension of this question is that of a large deviation principle, see e.g. \cite{Com87,Le95,DPdH96,ShWe05,BoSu12,DuRaWu16,Kr16b,Re18} and references therein.

We will consider the context of time-inhomogeneous interacting jump processes 
\begin{equation} \label{eqn:intro_weakly_interacting_processes}
	(X_{n,1}(t),\dots,X_{n,n}(t))_{t \geq 0}
\end{equation} 
on a finite-state space $\{1,\dots,q\}$. We assume that the processes are fully exchangeable, jump one-by-one, and interact weakly: their jump rates depend on their empirical measure $\mu_n(t) := n^{-1} \sum_{i=1}^n \delta_{X_{n,i}(t)}$.  We will study the large deviation behaviour of the trajectory of empirical measures $t \mapsto \mu_n(t)$ as $n$ gets large.

We assume that that the interaction has the following properties.
\begin{enumerate}[(1)]
	\item \label{item:intro_weak_interaction}The interaction is \textit{weak}: each of the $n$ process in \eqref{eqn:intro_weakly_interacting_processes} jumps over the bond $(a,b) \in \Gamma := \left\{(a,b) \in \{1,\dots,q\}^2 \, \middle| \, a \neq b\right\}$ with rate $r_n(t,a,b,\mu_n)$,
	\item \label{item:intro_time_periodic} The jump rates are \textit{time-periodic with decreasing period size}. That is, there is a constant $T_0 > 0$ and a sequence of constants $\gamma_n \rightarrow \infty$ such that 
	\begin{equation*}
		r_n(t + \gamma_n^{-1}T_0,a,b,\mu_n) = r_n(t,a,b,\mu_n)
	\end{equation*}
	for all $t \geq 0$, $\mu_n$ and $(a,b) \in \Gamma$. 
	\item \label{item:intro_convergence} The rates are \textit{converging}: there is a kernel $r(t,a,b,\mu)$ such that 
	\begin{equation*}
		\lim_{n \rightarrow \infty} \sup_{t \leq T_0} \sup_{(a,b) \in \Gamma, \mu \in \cP_n(\{1,\dots,n\})} \left|r_n(\gamma_n^{-1}t, a,b,\mu) - r(t,a,b,\mu) \right| = 0,
	\end{equation*}
	where $\cP_n(\{1,\dots,q\})$ is the set of measures of the form $n^{-1} \sum_{i = 1}^n \delta_{x_i}$ for $x_1,\dots,x_n \in \{1,\dots,q\}$.
	\item \label{item:intro_Lipschitz} The rates are \textit{Lipschitz}: there is some $C > 0$ such that 
	\begin{equation*}
		\sup_n \sup_{t \leq T_0} \sup_{\mu,\nu \in \cP_n(\{1,\dots,q\})} \sum_{(a,b) \in \Gamma} \left|r_n(t,a,b,\mu) - r_n(t,a,b,\nu)\right| \leq C\left|\mu - \nu \right|.
	\end{equation*}
\end{enumerate}
The periodicity on a time-interval that is decreasing in length has the effect that the interacting particle system undergoes an effective averaging effect and this will be seen in the final large deviation result. Note that the $\gamma_n$ do not model a speed-up of the process, but rather model an external factor which lives on a faster time-scale.

\smallskip

Recent works on path-space large deviations by \cite{Re18,PaRe19} and works in mathematical physics \cite{MNW08,BeDSGaJLLa02}, or \cite{BeDSGaJLLa06,BeFaGa15,BeChFaGa18} on the study of hydrodynamic limits or long-time (Donsker-Varadhan) large deviations, have shown that studying the process of the empirical measures together with the empirical fluxes simplifies proofs and gives greater insight in the large deviation principles. We will follow these insights and study the empirical measures of the processes in \eqref{eqn:intro_weakly_interacting_processes} in combination with their empirical fluxes.

This paper can thus be seen as a natural continuation of \cite{PaRe19,Kr16b,DuRaWu16}. The papers \cite{DuRaWu16,PaRe19} are more general in the sense that they consider contexts where multiple processes can jump at the same time. If we restrict their results to the context where only a single process jumps we extend the three papers by including a time averaging effect. In addition, we  extend \cite{Kr16b} by including fluxes, \cite{DuRaWu16} by allowing more general rates and include fluxes, and \cite{PaRe19} by including more general rates. Finally, we establish the large deviation principle by using a non-standard technique using the machinery of Hamilton-Jacobi equations introduced by \cite{FK06}. We give a more elaborate comparison after the statement after the introduction of our main results.

\smallskip

Consider the processes \eqref{eqn:intro_weakly_interacting_processes} and denote by $W_{n,i}(t)$ the number of jumps made by $X_{n,i}(t)$ up to time $t$ across each directed edge $(a,b) \in \Gamma$.  We will establish the large deviation principle for the trajectory of the empirical measure-flux pair
\begin{equation} \label{eqn:intro_pair_of_processes}
	t \mapsto Z_n(t) := \left(\frac{1}{n} \sum_{i=1}^n \delta_{X_{n,i}(t)},\frac{1}{n} \sum_{i=1}^n W_{n,i}(t)\right)
\end{equation}
on the Skorokhod space of trajectories in $E := \cP(\{1,\dots,q\}) \times (\bR^+)^{\Gamma}$. The rate function is given in Lagrangian form: 
\begin{equation*}
	I(\mu,w) := \begin{cases}
		I_0(\mu(0),0) + \int_0^\infty \cL((\mu(s),w(s),(\dot{\mu},\dot{w}(s))) \dd s, & \text{if } (\mu,w) \in \cA\cC, \\
		\infty & \text{otherwise},
	\end{cases}
\end{equation*}
where $\cA\cC$ is an appropriate space of absolutely continuous trajectories in $E$. The Lagrangian is given as a sum over relative entropies $S(z \, | \, v)  := z \log \frac{z}{v} - z + v$:
\begin{multline}
	\cL((\mu(s),w(s),(\dot{\mu},\dot{w}(s))) \label{eqn:intro_lagrangian} \\
	:= \begin{cases}
		\sum_{(a,b) \in \Gamma} S\left( \dot{w}_{(a,b)} \, | \, \mu(a) \overline{r}(a,b,\mu)\right) & \text{if } \forall \, a: \dot{\mu}(a) =\sum_b \dot{w}_{(a,b)} - \dot{w}_{(b,a)}, \\
		\infty & \text{otherwise}.
	\end{cases}  
\end{multline}
The kernel $\overline{r}$ denotes the outcome of the averaging principle from \ref{item:intro_time_periodic} and \ref{item:intro_convergence}:
\begin{equation*}
	\overline{r}(a,b,\mu) := \frac{1}{T_0} \int_0^{T_0} r(t,a,b,\mu) \dd t.
\end{equation*}

The key step in the proof of the large deviation result in this paper, and in addition our second main result, is the establishment of the comparison principle (implying uniqueness of viscosity solutions) to a collection of associated Hamilton-Jacobi equations $f - \lambda Hf = h$, for $\lambda > 0$ and $h \in C_b(E)$. The operator $H$ in this equation is given by $Hf(\mu,w) = \cH((\mu,w),\nabla f(\mu,w))$ where $\cH$ is the Legendre transform of $\cL$ from \eqref{eqn:intro_lagrangian}. Its explicit representation is given by 
\begin{multline} \label{eqn:intro_Hamiltonian_difficult}
	\cH((\mu,w),p) = \sum_{(a,b) \in \Gamma} \mu(a) \overline{r}(a,b,\mu)\left[\exp\left\{p_b - p _a + p_{(a,b)} \right\} - 1 \right], \\
	(\mu,w) \in \cP(\{1,\dots,q\}) \times (\bR^+)^\Gamma, p \in \bR^{q} \times \bR^{\Gamma}.
\end{multline} 
Due to the terms of the type $\mu(a) r(a,b,\mu)\left[e^{p_b - p_a + p_{(a,b)}} - 1 \right]$ the Hamiltonian is neither Lipschitz nor uniformly coercive in $p$. This implies that our Hamilton-Jacobi equation can not be treated using `standard' methods for first-order equations, see \cite{FlSo06,BaCD97,CIL92} and references therein. Instead, our method improves upon the method of \cite{Kr16b} which was designed for the Hamiltonian of weakly interacting jump processes without taking into account the fluxes. The novelty of the proof of the comparison principle, compared to \cite{Kr16b}, is based on a novel `two stage' penalization procedure, which potentially can be used to treat other types of `non-standard' first-order Hamilton-Jacobi equations, see Sections \ref{section:general_method_for_comparison_principle} and \ref{section:comparison_explicit}.

We stress that the verification of the comparison principle is of interest beyond the large deviation statement that is proven in this paper. First of all, the comparison principle can find other applications in the field of control theory or mean-field games. Secondly, an extension of the comparison principle in this paper by the bootstrap principle introduced in \cite{KrSc19} leads to comparison principles for more elaborate Hamilton-Jacobi (-Bellman) equations. In turn these boosted comparison principles can be used for new large deviation principles, as carried out in the forthcoming work \cite{KrSchl20} in the context of more general slow-fast systems.

\smallskip

We next compare our large deviation result to results in the literature. 

Large deviations for weakly interacting jump processes have been studied in the past, see e.g. \cite{Com87,Le95,DPdH96} in contexts with spatial structure or random fields. The methods of proof were based on direct evaluation of the asymptotics or tilting arguments based on Sanov's Theorem, Varadhan's lemma and the contraction principle.

\smallskip

More recent papers in the context of non-spatial processes have focused on different methods of proof \cite{Kr16b,DuRaWu16}, or have included fluxes \cite{Re18,PaRe19}. Of these four papers, two \cite{Kr16b,Re18} still focus on processes with transitions of the type where one particle moves its state, whereas other two papers \cite{DuRaWu16,PaRe19} allow for more general transitions, e.g. allowing more particles to change their state at a single time or consider mass-action kinetics.  

As a first remark, this paper includes an averaging effect for path-space large deviations. If we restrict ourselves to the time-homogenous case, we can compare our large deviation principle to those of \cite{Kr16b,Re18,DuRaWu16,PaRe19}. We focus our comparison to \cite{DuRaWu16,PaRe19}, as this paper supersedes \cite{Kr16b} by the inclusion of fluxes, and \cite{PaRe19} supersedes \cite{Re18} by generalizing the single-jump setting as well as letting go of the independence assumption.

In \cite{DuRaWu16}, the authors work in the context without fluxes. The proof of the large deviation principle is based on a variational expression for the Poisson random measure, of which it is established that the expression converges as $n \rightarrow \infty$. An approximation argument based on ergodicity is used to reduce the proof of the lower bound to trajectories that lie in the interior of the simplex of probability measures. It is also assumed that the law of large numbers limit pushes the empirical measure into the interior of the simplex. If restricted to the context of single-jumps only, this paper covers more cases as assumptions of the type resembling these two final conditions of \cite{DuRaWu16} are absent from this paper.

In \cite{PaRe19}, following \cite{Re18}, the empirical measure is combined with the empirical fluxes. The inclusion of the fluxes allows for a clear and direct change of measure argument leading in a straightforward way to the Lagrangrian in terms of a sum over appropriate relative entropies. In the context of single jumps, our result extends that of \cite{PaRe19}. Two key assumptions in \cite{PaRe19} are Assumption 2.2 (v) and (vi). The first of these two conditions is naturally reflected by the assumption that the limit of the jump rates form a \textit{proper kernel} as in Assumption \ref{assumption:jump_rates} \ref{item:results_rate_zero_or_positive}. It should be noted that (v) of \cite{PaRe19} is more restrictive and excludes for example Glauber type interactions like in Example \ref{example:Gibbs_dynamics}. In addition, this paper does not assume an analogue of \cite[Assumption 2.2 (vi)]{PaRe19}.

\smallskip

We thus see that the proof via the comparison principle in the context of systems with single jumps yields the most general results, and with additional work would allow for a generalization to the context where the rates are non-Lipschitz as in \cite{Kr16b}. The proof of the comparison principle, however, uses a technique that is very much geared towards Hamiltonians of the type \eqref{eqn:intro_Hamiltonian_difficult} and can not directly be adapted to the more general setting of processes with multiple simultaneous jumps of \cite{DuRaWu16,PaRe19}.  More remarks on these restrictions are given in Section \ref{section:comparison_explicit}.

\smallskip

The paper is organized as follows. We start in Section \ref{section:preliminaries} with basic definitions, including those of viscosity solutions of Hamilton-Jacobi equations, the comparison principle, the martingale problem, and the large deviation principle.  In Section \ref{section:main_results} we state our main results: the comparison principle and the large deviation principle. 

In Section \ref{section:LDP_via_HJ} we give the key results that reduce the proof of the large deviation principle to the comparison principle. We then prove the comparison principle in Section \ref{section:proofs_comparison} and we follow with the verification of the remaining assumptions for the results of Section \ref{section:LDP_via_HJ} in Section \ref{section:proof_ldp_explicit_other_steps}. In Appendix \ref{appendix:viscosity_solutions}, we collect some results for the literature that are essential for the proof of the comparison principle. Their inclusion makes the paper as self-contained as possible.

\section{Preliminaries} \label{section:preliminaries}

Let $E$ be a Polish space. We denote by $\cP(E)$ the space of Borel probability measures on $E$. By $\cP_n(E)$ we denote the subset of measures that have the form $n^{-1} \sum_{i=1}^n \delta_{x_i}$ for some collection $\{x_i\}_{i=1}^n \subseteq E$.

We denote by $D_E(\bR^+)$ the space of paths $\gamma : \bR^+ \rightarrow E$ that are right continuous and have left limits. We endow $D_E(\bR^+)$ with the Skorokhod topology, cf. \cite[Section 3.5]{EK86}. An important property is that under this topology $D_E(\bR^+)$ is Polish if $E$ is Polish.

We denote by $C(E)$ and $C_b(E)$ the spaces of continuous and bounded continuous functions on $E$. For $d \in \bN \setminus \{0\}$ and $k \in \bN$ let $C_b^k(\bR^d)$ be the space of functions that have $k$ continuous and bounded derivatives. By $C_b^\infty(\bR^d)$ we denote the space of functions with bounded continuous derivatives of all orders. 

Now consider a subset $E \subseteq \bR^d$ that is a Polish space and that is contained in the $\bR^d$ closure of its $\bR^d$ interior. We denote by $C^k_b(E)$, $C^\infty_b(E)$ the spaces of functions that have an extension to $C^k_b(\bR^d)$ and $C^\infty_b(\bR^d)$ respectively. Finally, denote by $C_c^k(E)$ and $C_c^\infty(E)$ the subsets that have compact support in $E$. Note that the derivative of a continuously differentiable function on $E$ is determined by the values of the function on $E$ by our assumption on $E$.

Finally, we introduce the space $\cA\cC(E)$ of absolutely continuous paths in $E$. A curve $\gamma: [0,T] \to E$ is absolutely continuous if there exists a function $g \in L^1([0,T],\bR^d)$ such that for $t \in [0,T]$ we have $\gamma(t) = \gamma(0) + \int_0^t g(s) \dd s$. We write $g = \dot{\gamma}$.\\
A curve $\gamma: \bR^+ \to E$ is absolutely continuous, i.e. $\gamma \in \cA\cC(E)$, if the restriction to $[0,T]$ is absolutely continuous for every $T \geq 0$.

\subsection{Large deviations} \label{section:preliminaries_LDP}

Let $\cX$ be a Polish space. Later we will use both $\cX = D_E(\bR^+)$ and $\cX = E$.

\begin{definition}
	Let $\{X_n\}_{n \geq 1}$ be a sequence of random variables on $\mathcal{X}$. Furthermore, consider a function $I : \mathcal{X} \rightarrow [0,\infty]$. We say that
	\begin{itemize}
		\item  
		the function $I$ is a \textit{good rate-function} if the set $\{x \, | \, I(x) \leq c\}$ is compact for every $c \geq 0$;
		%
		\item 
		the sequence $\{X_n\}_{n\geq 1}$ satisfies the \textit{large deviation principle} and good rate-function $I$ if for every closed set $A \subseteq \mathcal{X}$, we have 
		\begin{equation*}
			\limsup_{n \rightarrow \infty} \, \frac{1}{n} \log \PR[X_n \in A] \leq - \inf_{x \in A} I(x),
		\end{equation*}
		and, for every open set $U \subseteq \mathcal{X}$, 
		\begin{equation*}
			\liminf_{n \rightarrow \infty} \, \frac{1}{n} \log \PR[X_n \in U] \geq - \inf_{x \in U} I(x).
		\end{equation*}
	\end{itemize}
\end{definition}

\subsection{The martingale problem} \label{section:martingale_problem}

One effective way of defining a Markov process on $E$ is by using its infinitesimal generator, see e.g. \cite{EK86}. One of the instances of this idea is that of solving the martingale problem. 

We introduce the martingale problem for time-inhomogeneous processes. Note that this is a straightforward extension from the time-homogeneous case via the inclusion of time in the state-space, see for example Section 4.7.A in \cite{EK86} or Proposition II.5.7 in \cite{Pe02}.

Let $A : \cD(A) \subseteq C_b(E) \rightarrow C_b(\bR^+ \times E)$ be a linear operator. For each time $s \in \bR^+$, we denote by $A[s] : \cD(A) \subseteq C_b(E) \rightarrow C_b(E)$ the linear operator obtained by fixing $s$. $A[s]$ can be interpreted as the generator at time $s$. In addition, we construct out of the operators $A[s]$ an operator $\vec{A}$ on $C_b(\bR^+ \times E)$:
\begin{itemize}
	\item $\cD(\vec{A})$ satisfies
	\begin{equation*}
		\cD(\vec{A}) \subseteq \left\{f \in C_b(\bR^+ \times E) \, \middle| \, \forall \, x \in E \colon \, f(\cdot,x) \in C^1_b(\bR^+), \, \forall s \in \bR^+ \colon \, f(s,\cdot) \in \cD(A) \right\},
	\end{equation*}
	\item for $f \in \cD(\vec{A})$ we have $\vec{A}f(s,x) = \partial_s f(s,x) + (A[s]f(s,\cdot))(x)$.
\end{itemize}


\begin{definition}
	Let $\mu \in \cP(E)$. We say that the process $t \mapsto X(t)$ on $D_E(\bR^+)$ solves \textit{the (time-inhomogeneous) martingale problem} for $(\vec{A},\mu)$ if for all $f \in \cD(\vec{A})$ the process
	\begin{align*}
		M_f(t) & := f(t,X(t)) - f(0,X(0))  - \int_0^t  \vec{A} f(s,X(s)) \dd s \\
		& = f(t,X(t)) -  f(0,X(0)) - \int_0^t \partial_s f(s,X(s)) +  (A[s]f(s,\cdot))(X(s)) \dd s 
	\end{align*}
	is a martingale and if the projection of $\PR$ on the time $0$ coordinate equals $\mu$. 
	
	By slight abuse of notation, we will also say that the measure of the process $t \mapsto (t,X(t))$ solves the martingale for $\vec{A}$.
\end{definition}


\subsection{Viscosity solutions to Hamilton-Jacobi equations} \label{section:preliminaries_viscosity_solutions}

\begin{definition}[Viscosity solutions]
	Let $H : \cD(H) \subseteq C_b(E) \rightarrow C_b(E)$, $\lambda > 0$ and $h \in C_b(E)$. Consider the Hamilton-Jacobi equation
	\begin{equation}
		f - \lambda H f = h. \label{eqn:differential_equation} 
	\end{equation}
	We say that $u$ is a \textit{(viscosity) subsolution} of equation \eqref{eqn:differential_equation} if $u$ is bounded, upper semi-continuous and if, for every $f \in \cD(H)$ there exists a sequence $x_n \in E$ such that
	\begin{gather*}
		\lim_{n \uparrow \infty} u(x_n) - f(x_n)  = \sup_x u(x) - f(x), \\
		\lim_{n \uparrow \infty} u(x_n) - \lambda H f(x_n) - h(x_n) \leq 0.
	\end{gather*}
	We say that $v$ is a \textit{(viscosity) supersolution} of equation \eqref{eqn:differential_equation} if $v$ is bounded, lower semi-continuous and if, for every $f \in \cD(H)$ there exists a sequence $x_n \in E$ such that
	\begin{gather*}
		\lim_{n \uparrow \infty} v(x_n) - f(x_n)  = \inf_x v(x) - f(x), \\
		\lim_{n \uparrow \infty} v(x_n) - \lambda Hf(x_n) - h(x_n) \geq 0.
	\end{gather*}
	We say that $u$ is a \textit{(viscosity) solution} of equation \eqref{eqn:differential_equation} if it is both a subsolution and a supersolution to \eqref{eqn:differential_equation}.
	
	We say that \eqref{eqn:differential_equation} satisfies the \textit{comparison principle} if for every subsolution $u$ and supersolution $v$ to \eqref{eqn:differential_equation}, we have $u \leq v$.
\end{definition}

\begin{remark}
	The comparison principle implies uniqueness of viscosity solutions. Suppose that $u$ and $v$ are both viscosity solutions, then the comparison principle yields that $u \leq v$ and $v \leq u$, implying that $u = v$.
\end{remark}

\begin{remark} \label{remark:existence of optimizers}
	Consider the definition of subsolutions. Suppose that the testfunction $f \in \cD(H)$ has compact sublevel sets, then instead of working with a sequence $x_n$, we can pick a $x_0$ such that
	\begin{gather*}
		u(x_0) - f(x_0)  = \sup_x u(x) - f(x), \\
		u(x_0) - \lambda H f(x_0) - h(x_0) \leq 0.
	\end{gather*}
	A similar simplification holds in the case of supersolutions.
\end{remark}

\section{Main results} \label{section:main_results}


In this section we give our two main results: the large deviation principle and the comparison principle. We give a short recap of some of the definitions informally given in the introduction. Let $\{1,\dots,q\}$, $q \in \bN\setminus\{0,1\}$ be some finite set. Write $\Gamma = \left\{(a,b) \in \{1,\dots,q\}^2 \, \middle| \, a \neq b \right\}$ for the directed edge-set in $\{1,\dots,q\}$. Let $E = \cP(\{1,\dots,q\}) \times (\bR^+)^\Gamma$ be the Polish space of probability measures on $\{1,\dots,q\}$ combined with a space in which we can keep track of the fluxes over the directed bonds in $\Gamma$.

For $n$ points $\vec{x} = (x_1,\dots,x_n) \in \{1,\dots,q\}$ denote by $\mu_n[\vec{x}]$ the empirical measure $\mu_n[\vec{x}] = n^{-1} \sum_{i=1}^n \delta_{x_i}$.

\smallskip

We consider a collection of weakly-interacting jump processes 
\begin{equation} \label{eqn:results_weakly_interacting_processes}
	\vec{X}_n(t) = (X_{n,1}(t),\dots,X_{n,n}(t))_{t \geq 0}
\end{equation}
on the space $\{1,\dots,q\}$ and $\mu_n(t) := \mu_n[\vec{X}_n(t)]$ the empirical measure of the process at time $t$. For any given $n$, we will assume that each of the $n$ processes, if at state $a$, jumps to state $b$ with rate
\begin{equation*}
	r_n(t,a,b,\mu_n(t)),
\end{equation*}
i.e. the processes interact weakly through their empirical measure.

We are interested in the large deviation behaviour of the trajectory $t \mapsto \mu_n(t)$ on the space $\cP(\{1,\dots,q\})$. Following \cite{Re18,PaRe19}, it turns out that a description of the large deviation principle simplifies if we take into account also the fluxes across the bonds in $\Gamma$. Therefore, denote by
\begin{equation*}
	t \mapsto W_{n,i}(t) \in \bN^\Gamma
\end{equation*}
the process that satisfies
\begin{equation*}
	W_{n,i}(t)(a,b) := \#\left\{s \leq t \, \middle| \, \left(X_{n,i}(s-), X_{n,i}(s)\right) = (a,b) \right\}.
\end{equation*}

Our first result establishes the large deviation principle for the pair of processes
\begin{equation} \label{eqn:main_pair_of_processes}
	t \mapsto Z_n(t) := \left(\frac{1}{n} \sum_{i=1}^n \delta_{X_{n,i}(t)},\frac{1}{n} \sum_{i=1}^n W_{n,i}(t)\right),
\end{equation}
on the set $D_E(\bR^+)$.

In Section \ref{section:LDP_periodic}, we state our large deviation principle. In Section \ref{section:examples} we give an example in the context of Glauber dynamics.  We end our section of main results in Section \ref{section:comparison_principle} with the uniqueness result for the associated Hamilton-Jacobi equations.

\subsection{Flux large deviations for time-periodic jump rates} \label{section:LDP_periodic}

\begin{assumption} \label{assumption:jump_rates}
	The rates $r_n(t,a,b,\mu)$ at which each of the processes in \eqref{eqn:results_weakly_interacting_processes} jumps over the bond $(a,b) \in \Gamma$ at time $t$ while the empirical measure is given by $\mu$ satisfies the following properties.
	\begin{enumerate}[(a)]
		\item \label{item:results_time_periodic} The jump rates are \textit{time-periodic with decreasing period size}. That is, there is a constant $T_0 > 0$ and a sequence of constants $\gamma_n \rightarrow \infty$ such that $r_n(t + \gamma_n^{-1}T_0,a,b,\mu_n) = r_n(t,a,b,\mu_n)$ for all $t \geq 0$, $\mu_n$ and $(a,b) \in \Gamma$. 
		\item \label{item:results_convergence} The rates are \textit{converging}: there is a kernel $r(t,a,b,\mu)$ such that 
		\begin{equation*}
			\lim_{n \rightarrow \infty} \sup_{t \leq T_0} \sup_{(a,b) \in \Gamma, \mu \in \cP_n(\{1,\dots,n\})} \left|r_n(\gamma_n^{-1}t, a,b,\mu) - r(t,a,b,\mu) \right| = 0,
		\end{equation*}
		where $\cP_n(\{1,\dots,q\})$ is the set of measures of the form $n^{-1} \sum_{i = 1}^n \delta_{x_i}$ for $x_1,\dots,x_n \in \{1,\dots,q\}$.
		\item \label{item:results_rate_zero_or_positive} \textit{The averaged kernel is `proper'.} Denote by $\overline{r}$ the kernel 
		\begin{equation*}
			\overline{r}(a,b,\mu) := \frac{1}{T_0} \int_0^{T_0} r(t,a,b,\mu) \dd t.
		\end{equation*}
		For each $(a,b) \in \Gamma$, we have either (i) or (ii):
		\begin{enumerate}[(i)]
			\item $\overline{r}(a,b,\mu) = 0$ for all $\mu$,
			\item $\inf_{\mu} \overline{r}(a,b,\mu) > 0$.
		\end{enumerate}
		\item \label{item:results_Lipschitz} The rates are \textit{Lipschitz}: there is some $C > 0$ such that 
		\begin{equation*}
			\sup_n \sup_{t \leq T_0} \sup_{\mu,\nu \in \cP_n(\{1,\dots,q\})} \sum_{(a,b) \in \Gamma} \left|r_n(t,a,b,\mu) - r_n(t,a,b,\nu)\right| \leq C\left|\mu - \nu \right|.
		\end{equation*}
	\end{enumerate}

\end{assumption}

\begin{remark}
	Note that the time-periodic context includes the time-homogeneous contexts. Namely, if the rates do not depend on $t$ than any choice of $T_0 > 0$ and $\gamma_n \rightarrow \infty$ will induce time-periodicity of the type above.
\end{remark}

Our first main result is the large deviation principle for the pair of processes \eqref{eqn:main_pair_of_processes}  in the context of time periodic rates.

\begin{theorem} \label{theorem:ldp_mean_field_jump_process_periodic}
	For each $n$ consider the process of state-flux pairs 
	\begin{equation*}
		((X_{n,1}(t),W_{n,1}(t)),\dots,(X_{n,n}(t),W_{n,n}(t)))_{t \geq 0},
	\end{equation*} 
	where the jump rates $r_n$ satisfy Assumption \ref{assumption:jump_rates}.
	
	\smallskip
	
	Consider the processes $t \mapsto Z_n(t) := \left(\frac{1}{n} \sum_{i=1}^n \delta_{X_{n,i}(t)},\frac{1}{n} \sum_{i=1}^n W_{n,i}(t)\right)$. Suppose that $Z_n(0)$ satisfies a large deviation principle on $E = \cP(\{1,\dots,q\}) \times (\bR^+)^\Gamma$ with good rate function $I_0$,  then $\{Z_n\}_{n \geq 1}$ satisfies the large deviation principle on $D_{E}(\bR^+)$ with good rate function $I$ 
	given by
	\begin{equation*}
		I(\mu,w) = 
		\begin{cases}
			I_0(\mu(0),w(0)) + \int_0^\infty \cL((\mu(s),w(s)),(\dot{\mu}(s),\dot{w}(s))) \dd s & \text{if } (\mu,w) \in \cA\cC(E), \\
			\infty & \text{otherwise},
		\end{cases}
	\end{equation*}
	where $\cL : E \times \bR^{q + |\Gamma|} \rightarrow \bR^+$ is given by
	\begin{equation*}
		\cL\left((\mu,w),(\dot{\mu},\dot{w})\right) = \begin{cases}
			\sum_{(a,b) \in \Gamma} S(\dot{w}_{(a,b)} \, | \, \mu(a) \overline{r}(a,b,\mu))  & \text{if } \forall \, a: \, \dot{\mu}_a = \sum_b \dot{w}_{(b,a)} - \dot{w}_{(a,b)}, \\
			\infty & \text{otherwise},
		\end{cases}
	\end{equation*}
	with 
	\begin{equation*}
		S(z \, | \, v) := \begin{cases}
			v & \text{if } z = 0, \\
			z \log \frac{z}{v} - z + v & \text{if } z \neq 0, v \neq 0, \\
			\infty & \text{if } z \neq 0, v = 0.
		\end{cases}
	\end{equation*}
	
\end{theorem}

Note that the Lagrangian is given in terms of the entropic cost of changing the flux across each bond. Indeed, $S$ is the relative entropy corresponding to a tilt of the intensity of a Poisson measure. 

\smallskip

\begin{remark}
	Theorem \ref{theorem:ldp_mean_field_jump_process_periodic} gives as a corollary the large deviation principle for the trajectory of the empirical measures only. This recovers e.g. the result of \cite{Kr16b} but now in contracted form. The rate function $J$ is given by 
	\begin{multline*}
		J(\mu) = I_0(\mu(0)) + \inf \left\{\int_0^\infty \sum_{(a,b) \in \Gamma} S(\dot{w}_{(a,b)}(s) \, | \, \mu(a) \overline{r}(a,b,\mu(s))) \dd s  \right.\\
		\left.  \, \middle| \, w \in \cA\cC(E), \forall \, a: \, \dot{\mu}(a) = \sum_b \dot{w}_{(b,a)} - \dot{w}_{(a,b)} \right\}
	\end{multline*}
	if $\mu$ is absolutely continuous and $J(\mu) = \infty$ otherwise.
\end{remark}

As a second remark, a comment on the Lipschitz property in Assumption \ref{assumption:jump_rates} \ref{item:results_Lipschitz}.

\begin{remark}
	The Lipschitz assumption can be dropped in the context that the rates are time-homogeneous. 
	
	The uniqueness of solutions of the Hamilton-Jacobi equation in Theorem \ref{theorem:comparison_principle} below does not depend on this statement. Thus, a large deviation principle for the time-homogeneous case without the Lipschitz condition can be obtained by dropping the $F_{f,n}$ or $h_n$ term in Proposition \ref{proposition:convergence_of_operators} and Lemma \ref{lemma:convergence_of_operators_modulo_time_period}. Alternatively, one can adapt the operator convergence proof in \cite{Kr16b}.
	
	Even in the context of  time-inhomogeneous rates, the Lipschitz property is not essential. In a work in progress, \cite{KrSchl20}, this is explored in the more general context of Markov processes with two time-scales. A proof of convergence of operators and how to deal with the viscosity solutions for the limiting operators in this general two-scale context is technically more challenging. We refrain from carrying this out in this context and keep a simpler and independent proof in this paper.
\end{remark}


\subsection{Example: the Curie-Weiss-Potts model} \label{section:examples}

Next, we establish the path-space large deviation principle in the context of the dynamic Curie-Weiss-Potts model.

\begin{example}[Time-dependent potential functions] \label{example:Gibbs_dynamics}
	Let $V : \bR^+ \times \bR^q \rightarrow \bR$ be a twice continuously differentiable function in the second component and such that $V(t +1, \cdot) = V(t,\cdot)$ for all $t \geq 0$. Fix $r_0 : \{1,\dots,q\}\times\{1,\dots,q\} \rightarrow \bR^+$. Finally, let $\gamma_n \rightarrow \infty$ and set
	\begin{equation*}
		r_n(t,a,b,\mu) := r_0(a,b)\exp\left\{- n 2^{-1}\left(V\left(\gamma_n t, \mu - n^{-1}\delta_a + n^{-1} \delta_b\right) - V(\gamma_n t,\mu)\right) \right\}.
	\end{equation*}
	As $n$ goes to infinity, the limiting kernel $r$ becomes
	\begin{equation*}
		r(t,a,b,\mu) := r_0(a,b) \exp\left\{\frac{1}{2} \nabla_a V(t,\mu) - \frac{1}{2} \nabla_b V(t,\mu) \right\},
	\end{equation*}
	so that
	\begin{equation*}
		\overline{r}(a,b,\mu) :=  r_0(a,b) \int_0^1 \exp\left\{\frac{1}{2} \nabla_a V(t,\mu) - \frac{1}{2} \nabla_b V(t,\mu) \right\} \dd t.
	\end{equation*}
\end{example}

\subsection{The comparison principle} \label{section:comparison_principle}

We close our section of main results with the uniqueness of viscosity solutions  for the Hamilton-Jacobi equation.

The motivation for this well-posedness result comes from a connection between large deviation theory of Markov processes and non-linear semigroup theory that by a chain of arguments leads to Theorem  \ref{theorem:ldp_mean_field_jump_process_periodic}. This chain of arguments was first introduced in \cite{FK06} and later reproved in \cite{Kr19,Kr19c}. The key technical steps in this method are collected in Section \ref{section:LDP_via_HJ}.

This reduction, even though at first sight technical, is fully analogous to that of establishing weak convergence of Markov processes and is carried out via the convergence of their a transformed version of their infinitesimal generators. The statement that the martingale problem for the limiting operator is well posed is naturally replaced by uniqueness of solutions for the Hamilton-Jacobi equation (see \cite{CoKu15} for the result that the uniqueness of the martingale problem for a linear operator follows from uniqueness of solutions to the Hamilton-Jacobi equation in terms of this operator).

\smallskip

In the proof of Theorem \ref{theorem:ldp_mean_field_jump_process_periodic}, we see that the natural limiting operator $H$ is  of the form $Hf(\mu,w) = \cH((\mu,w), \nabla f(\mu,w))$ where 
\begin{equation} \label{eqn:Hamiltonian_ldp_in_comparison}
	\cH((\mu,w),p) = \sum_{(a,b) \in \Gamma} \mu(a) \overline{r}(a,b,\mu)\left[\exp\left\{p_b - p _a + p_{(a,b)} \right\} - 1 \right].
\end{equation}

\smallskip

Our second main result is the well-posedness of the Hamilton-Jacobi equation $f - \lambda Hf = h$. We state it separately as it is of use in a context that goes beyond large deviation theory. It also holds in a slightly more general setting than for the Hamiltonian obtained in Theorem \ref{theorem:ldp_mean_field_jump_process_periodic}. We give the proper context.

\begin{definition} \label{definition:limiting_kernel}
	Let $v : \Gamma \times \cP(\{1,\dots,q\}) \rightarrow \bR^+$. We say that $v$ is a \textit{proper kernel} if $v$ is continuous and if for each $(a,b) \in \Gamma$, the map $\mu \mapsto v(a,b,\mu)$ is either identically equal to zero or satisfies the following two properties:
	\begin{enumerate}[(a)]
		\item $v(a,b,\mu) = 0$ if $\mu(a) = 0$ and $v(a,b,\mu) > 0$ for all $\mu$ such that $\mu(a) > 0$. 
		\item There exists a decomposition $v(a,b,\mu) = v_{\dagger}(a,b,\mu(a)) v_{\ddagger}(a,b,\mu)$ such that $v_{\dagger} : \Gamma \times [0,1] \rightarrow \bR^+$ is increasing in the third coordinate (that is, in $\mu(a)$) and such that $v_{\ddagger} : \Gamma \times \cP(\{1,\dots,q\}) \rightarrow \bR^+$ is continuous and satisfies $\inf_{\mu} v_{\ddagger}(a,b,\mu) > 0$.
	\end{enumerate}
\end{definition}

Note that the Hamiltonian in \eqref{eqn:Hamiltonian_ldp_in_comparison} features a proper kernel $v$. Choose $v_\dagger(a,b,\mu) = \mu(a)$ and $v_\ddagger(a,b,\mu) = \overline{r}(a,b,\mu)$ and argue using Assumption \ref{assumption:jump_rates} \ref{item:results_rate_zero_or_positive}.

\begin{theorem} \label{theorem:comparison_principle}
	Consider the Hamiltonian $H : \cD(H) \subseteq C_b( E) \rightarrow C_b(E)$ with domain $C_c^\infty(E) \subseteq \cD(H) \subseteq C_b^1(E)$ satisfying $Hf(\mu,w) = \cH((\mu,w),\nabla f(\mu,w))$ where $\cH : E \times \bR^d \rightarrow \bR$ is given by 
	\begin{equation} \label{eqn:Hamiltonian}
		\cH((\mu,w),p) = \sum_{(a,b) \in \Gamma} v(a,b,\mu)\left[\exp\left\{p_b - p _a + p_{(a,b)} \right\} - 1 \right].
	\end{equation}
	and where $v : \Gamma \times \cP(\{1,\dots,q\}) \rightarrow \bR^+$ is a proper kernel. Then for each $\lambda > 0$ and $h \in C_b(E)$ the comparison principle holds for 
	\begin{equation*}
		f - \lambda H f = h.
	\end{equation*}
\end{theorem}

The proof of the theorem can be found in Section \ref{section:comparison_explicit}.

\begin{remark} \label{remark:dimensionality}
	Note that a natural interpretation of $E$ is that as a subset of $\bR^{q + |\Gamma|}$. However, due to the fact we work with probability measures, we can also interpret $E$ as a subset of $\bR^{q-1 + |\Gamma|}$. The first interpretation is more natural to write down equations or Hamiltonians and we will do so in the subsequent sections stating the main results. Only in the second interpretation our set is a subset of $\bR^{q-1 + |\Gamma|}$  that is contained in the closure of its interior as as will be needed in the proofs of Section \ref{section:proofs_comparison}.
\end{remark}

\section{Large deviations via well-posedness of the Hamilton-Jacobi equation} \label{section:LDP_via_HJ}

The key tool that allows us to establish the path-space large deviation principle is the well-posedness of the Hamilton-Jacobi equation. In this section, we give an outline of this reduction. We base ourselves on the work \cite{FK06} in which this connection was first put on strong footing. We will also refer to \cite{Kr19c} in which a new proof of the functional analytic content of the reduction is given.

\smallskip

The method can best be compared to a similar situation in the context of the weak convergence of Markov processes. There one establishes: tightness, convergence of generators, and the uniqueness (and existence) of solutions of the limiting martingale problem. In the large deviation context, we follow the same strategy \footnote{In fact, the similarity is more than simply an analogy. It was established in \cite{CoKu15} that well-posedness for the Hamilton-Jacobi equation with a linear operator establishes uniqueness for the corresponding martingale problem.}: exponential tightness, convergence of non-linearly transformed generators, and well-posedness of the Hamilton-Jacobi equation for the limiting operator. 

\smallskip

To properly describe the reduction, we start by introducing an appropriate martingale problem for the empirical measures and their fluxes. Afterwards, we introduce the appropriate notion of convergence of operators. After that we give the framework that reduces the large deviation principle to the comparison principle.

\subsection{An appropriate martingale problem}  \label{section:proof_martingale_problem}

We next introduce the martingale problem that corresponds to the evolution of the empirical measure and empirical fluxes of $n$ weakly interacting Markov processes. 

Denote by 
\begin{equation*}
	E_n := \left\{(\mu,w) \in E \, \middle| \, \exists (x_1,\dots,x_n) \in \{1,\dots,q\}, W \in \bN^\Gamma: \mu = \frac{1}{n} \sum_{i=1}^n \delta_{x_i}, w = \frac{1}{n}W\right\}
\end{equation*}
the state space of the process of empirical measures and fluxes when working with $n$ interacting processes.

Following the setting in Section \ref{section:main_results}, we see that at moment that one of the processes $X_{n,i}$ makes a jump from site $a$ to $b$, the empirical measure makes a change from $\mu_n[\vec{X}]$ to $\mu_n[\vec{X}] + \tfrac{1}{n}\left(\delta_b - \delta_a\right)$, whereas the empirical flux $w$ is increased by $n^{-1}$ on the bond $(a,b) \in \Gamma$.

It follows that the corresponding generator for the process 
\begin{equation*}
	t \mapsto Y_n(t) := (t_0 + t,\mu_n(t),w_n(t))
\end{equation*}
on $D_{\bR^+ \times E_n}(\bR^+)$ is given by
\begin{multline*}
	\vec{A}_n f(t,\mu,w) = \partial_t f(t,\mu,x) \\
	+ \sum_{(a,b) \in \Gamma} \mu(a) r_n(t,a,b,\mu) \left[f\left(\mu + \frac{1}{n}(\delta_b - \delta_a), w + \frac{1}{n} \delta_{(a,b)}\right) - f\left(\mu,w\right)  \right].
\end{multline*}

Starting from this generator, we introduce the machinery to establish the large deviation principle.

\subsection{Convergence of operators}

To study the convergence of operators, we first need a notion of convergence of functions on a sequence of spaces.

\begin{definition}
	Let $f_n \in C_b(\bR^+ \times E_n)$ and $f \in C_b(E)$. We say that $\LIM f_n = f$ if 
	\begin{itemize}
		\item $\sup_n \vn{f_n} < \infty$,
		\item for all $T \geq 0$ and $M \geq 0$, we have
		\begin{equation*} 
			\lim_{n \rightarrow \infty} \sup_{\substack{(t,\mu,w) \in \bR^+ \times E_n: \\ t \leq T, |w| \leq M}} \left|f_n(t,\mu,w) - f(\mu,w) \right| = 0.
		\end{equation*}
	\end{itemize}
\end{definition}

The second condition of the definition of $\LIM$ is equivalent to the statement that for all $(\mu,w) \in E$ and any sequence $(t_n,\mu_n,w_n) \in \bR^+ \times E_n$ such that $(\mu_n,w_n) \rightarrow (\mu,w)$ and $\sup_n t_n < \infty$, we have $\lim_n f_n(t_n,\mu_n,w_n) = f(\mu,w)$. Note that for any function $f \in C_b(E)$, we have (by interpreting it as a function on $\bR^+ \times E_n$ by $f(t,x) = f(x)$) the natural statement $\LIM f = f$.

Using the notion of convergence of functions, we can define the extended limit of operators.

\begin{definition}
	Let $B_n \subseteq C_b(\bR^+ \times E_n) \times C_b(\bR^+ \times E_n)$. Define $ex-\LIM B_n \subseteq C_b(E) \times C_b(E)$ as the set
	\begin{multline*}
		ex-\LIM B_n \\
		= \left\{(f,g) \in C_b(E) \times C_b(E) \, \middle| \, \exists \, (f_n,g_n) \in B_n: \, \LIM f_n = f, \LIM g_n = g \right\}.
	\end{multline*}
\end{definition}

\subsection{The main reduction step} \label{section:reduction_LDP_to_HJ_intuition}

As announced at the beginning of the section, the large deviation principle can be derived from three main inputs:
\begin{enumerate}
	\item exponential tightness,
	\item convergence of operators,
	\item well-posedness of the Hamilton-Jacobi equation.
\end{enumerate}

This framework was first established in \cite[Theorem 7.18]{FK06}, but we will work in the context of the reworked version of \cite[Theorem 7.10]{Kr19c}. The key insight is the following reduction:
\begin{itemize}
	\item Exponential tightness allows one to reduce the large deviation principle on the Skorokhod space to that of the large deviations of the finite dimensional distributions.
	\item Brycs theorem, in combination with the Markov property, allows one to reduce the large deviation principle for the finite dimensional convergence to the large deviations at time $0$ and the convergence of conditional generating functions \eqref{eqn:conditional_generating_function}. 
	\item The conditional generating function forms a semigroup. The convergence of semigroups can be treated via a functional analytic framework.
\end{itemize}

Before giving the large deviation result, we introduce the involved operators for the functional analytic framework, and a weakened version of exponential tightness.

Thus, we start by introducing a semigroup, a resolvent and an infinitesimal generator. Denote by
\begin{equation} \label{eqn:conditional_generating_function}
	V_n(t)f(s,\mu,w) := \frac{1}{n} \log \bE\left[e^{n f(Y_n(t))} \, \middle| \, Y_n(0) = (s,w,\mu) \right]
\end{equation}
the conditional log-generating function. As a function of $t$ the operators $V_n(t)$ form a non-linear semigroup. The formal infinitesimal generator of the semigroup is given by the operator $H_n$ 
\begin{equation} \label{eqn:defH_abstract}
	\begin{aligned}
		\cD(H_n) & := \left\{f \in C_b(\bR^+ \times E_n) \, \middle| \, e^{nf} \in \cD(\vec{A}_n) \right\}, \\
		H_n f & := \frac{1}{n} e^{-n f} \vec{A}_n e^{n f},
	\end{aligned}
\end{equation}
and the corresponding resolvent $R_n(\lambda) = (\bONE - \lambda H_n)^{-1}$ is given by 
\begin{multline*}
	R_n(\lambda)h(s,\mu,w) \\
	:= \sup_{\bQ \in \cP(D_{\bR^+ \times E_n}(\bR^+))} \left\{ \int_0^\infty \lambda^{-1} e^{-\lambda^{-1}t} \left( \int h(Y_n(t)) \bQ(\dd Y_n) - \frac{1}{n} S_t(\bQ \, | \, \PR_{n,s,\mu,w} ) \right) \dd t\right\} 
\end{multline*}
where $S_t$ is the relative entropy on the set $D_{\bR^+ \times E_n}([0,T])$ and where $\PR_{n,s,\mu,w}$ is the law of $t \mapsto Y_n(t)$ when started in $(s,\mu,w)$. A full analysis of this triplet is given in \cite{Kr19c}.

Next, we introduce the exponential compact containment condition. This condition is a weakened version of exponential tightness on the path space. The weakening consists of only requiring exponential tightness for the time marginals. In our context, the empirical measures live on a compact space, so the condition translates to a control on the number of jumps that the process makes.

\begin{definition}
	We say that the processes $t \mapsto Y_n(t) = (t_0 + t,\mu_n(t),w_n(t))$ satisfy the \textit{exponential compact containment condition} if for each $T_0 > 0$, $M_0 > 0$, $T > 0$ and $a > 0$, there is a $M = M(T_0,M_0,T,a)$ such that
	\begin{equation*}
		\limsup_{n \rightarrow \infty} \sup_{(t_0,\mu,w) \in [0,T_0] \times E_n: |w| \leq M} \frac{1}{n} \log P_{t_0,\mu,w}\left[|w_n(t)| > M \text{ for some } t \in [0,T] \right] \leq - a.
	\end{equation*}
\end{definition}

\begin{theorem}[Adaptation of Theorem 7.10 of \cite{Kr19c} to our context] \label{theorem:abstract_LDP}
	Suppose that that the exponential compact containment condition holds.
	
	Denote $Z_n = (\mu_n(t),w_n(t))$. Suppose that
	\begin{enumerate}[(a)]
		\item \label{item:LDP_abstract_initialLDP} The large deviation principle holds for $Z_n(0)$ on $E$ with speed $n$ and good rate function $I_0$.
		\item \label{item:LDP_abstract_exptight} The processes $Z_n(t) = (\mu_n(t),w_n(t))$ are exponentially tight on $D_E(\bR^+)$ with speed $n$.
		\item \label{item:LDP_abstract_convergenceHn} There is an operator $H \subseteq C_b(E) \times C_b(E)$ such that $H \subseteq ex-\LIM H_n$.
		\item \label{item:LDP_abstract_comparison} For all $h \in C_b(E)$ and $\lambda > 0$ the comparison principle holds for $f - \lambda Hf = h$.
	\end{enumerate}
	
	Then there is a family of operators $R(\lambda)$, $\lambda > 0$ and a semigroup $V(t)$, $ t \geq 0$ on $C_b(E)$ such that for all $f \in C_b(E)$, $x \in E$ and $t > 0$, we have
	\begin{equation} \label{eqn:convergence_semigroup}
		V(t)f(x) = \lim_{m \rightarrow \infty} R^m\left(\frac{t}{m}\right) f(x).
	\end{equation}
	$V(t)$ and $R(\lambda)$ satisfy
	\begin{itemize}
		\item If $\lambda > 0$ and $\LIM h_n = h$, then $\LIM R_n(\lambda) h_n = R(\lambda) h$;
		\item For $h \in C_b(E)$, the function $R(\lambda)h$ is the unique function that is a viscosity solution to $f - \lambda H f = h$;
		\item If $\LIM f_n = f$ and $t_n \rightarrow t$ we have $\LIM V_n(t_n)f_n = V(t)f$
	\end{itemize}
	In addition, the processes $Z_n =(\mu_n,w_n)$ satisfy a large deviation principle on $D_E(\bR^+)$ with speed $n$ and rate function
	\begin{equation} \label{eqn:LDP_rate2}
		I(\gamma) = I_0(\gamma(0)) + \sup_{k \geq 1} \sup_{\substack{0 = t_0 < t_1 < \dots, t_k \\ t_i \in \Delta_\gamma^c}} \sum_{i=1}^{k} I_{t_i - t_{i-1}}(\gamma(t_i) \, | \, \gamma(t_{i-1})).
	\end{equation}
	Here $\Delta_\gamma^c$ is the set of continuity points of $\gamma$. The conditional rate functions $I_t$ are given by
	\begin{equation*}
		I_t(y \, | \, x) = \sup_{f \in C_b(E)} \left\{f(y) - V(t)f(x) \right\}.
	\end{equation*}
\end{theorem}

On the basis of this abstract result, we derive our main result.

\begin{proof}[Proof of Theorem \ref{theorem:ldp_mean_field_jump_process_periodic}]
	To apply Theorem \ref{theorem:abstract_LDP}, we have to verify \ref{item:LDP_abstract_initialLDP} to \ref{item:LDP_abstract_comparison}. Assumption \ref{item:LDP_abstract_initialLDP} follows by the assumption on the large deviation principle at time $0$ of Theorem \ref{theorem:ldp_mean_field_jump_process_periodic}. The other three properties will be checked below. We give their respective statements.
	
	\smallskip
	
	We verify \ref{item:LDP_abstract_exptight} in Proposition \ref{proposition:exponential_tightness} and we verify \ref{item:LDP_abstract_convergenceHn} in Proposition \ref{proposition:convergence_of_operators} below.
	
	\ref{item:LDP_abstract_comparison} is the result of Theorem \ref{theorem:comparison_principle} which will be proven in Section \ref{section:proofs_comparison}.
	
	\smallskip
	
	At this point, we have the large deviation principle with a rate function in projective limit form. That this rate function equals the rate-function in Lagrangian form follows from Proposition \ref{proposition:variational_representation}.
\end{proof}

\section{Verification of the comparison principle} \label{section:proofs_comparison}

\subsection{A general method to verify the comparison principle} \label{section:general_method_for_comparison_principle} 

Throughout this section, we assume that $E = \cP(\{1,\dots,q\}) \times (\bR^+)^{|\Gamma|}$ is parametrized as a subset of $\bR^{q-1 + |\Gamma|}$ to make sure that it is contained in the closure of its interior. We correspondingly write $d = q-1 + |\Gamma|$. 

\smallskip

In this section, we give the main technical results that can be used to verify the comparison principle. These methods are based on those used in \cite{CIL92,FK06,DFL11,Kr16b,CoKr17}. The key component in this method is the method of `doubling variables'. To obtain the comparison principle, one aims to give an upper bound on
\begin{equation} \label{eqn:upper_bound_basic_comparison}
	\sup_x u(x) - v(x).
\end{equation}
A direct estimate is hard to obtain, so instead one doubles the amount of variables and a penalization to large discrepancies between the variables. Thus one tries to find an upper bound for
\begin{equation*}
	\sup_{x,y} u(x) - v(y) - \alpha \Psi(x,y)
\end{equation*}
which converges to $0$ as $\alpha \rightarrow \infty$. If $\Psi(x,y) = 0$ if and only if $x = y$, one obtains as a consequence an upper bound on \eqref{eqn:upper_bound_basic_comparison} .

This technique works in the setting of compact spaces. In the context of non-compact spaces, one also has to penalize $x,y$ that are `far away'. Thus later works introduce the use of `containment' or Lyapunov functions. We introduce both these concepts below.

In this section, a novel aspect in comparison to the aforementioned papers, is the use of two `penalization' functions instead of one.

\begin{definition}
	We say that $\{\Psi_1,\Psi_2\}$, with $\Psi_i : E^2 \rightarrow \bR^+$ is a \textit{good pair of penalization functions} if $\Psi_i \in C^1(E^2)$ and if $x = y$ if and only if $\Psi_i(x,y) = 0$ for $i \in \{1,2\}$.
\end{definition}

In the proof of Theorem \ref{theorem:comparison_principle}, we work with a penalization
\begin{equation*}
	\alpha_1 \Psi_1 + \alpha_2 \Psi_2,
\end{equation*}
then send $\alpha_1 \rightarrow \infty$ and afterwards $\alpha_2 \rightarrow \infty$. To be able to treat both steps in a similar fashion, we introduce an auxiliary penalization function in which $\alpha_1$ is already sent to infinity:
\begin{equation*}
	\widehat{\Psi}_2(x,y)  
	:= \begin{cases}
		\Psi_2(x,y) & \text{if } \Psi_1(x,y) = 0, \\
		\infty & \text{if }  \Psi_1(x,y) \neq 0.
	\end{cases}
\end{equation*}

Finally, we introduce containment functions that allow us to restrict our analysis to compact sets.

\begin{definition} \label{definition:good_containment_function}
	Let $\cH : E \times \bR^d \rightarrow \bR$, we say that $\Upsilon : E \rightarrow \bR$ is a \textit{good containment function} (for $\cH$) if
	\begin{enumerate}[($\Upsilon$a)]
		\item $\Upsilon \geq 0$ and there exists a point $z_0$ such that $\Upsilon(z_0) = 0$,
		\item $\Upsilon$ is continuously differentiable, 
		\item for every $c \geq 0$, the sublevel set $\{z \in E \, | \, \Upsilon(z) \leq c\}$ is compact,
		\item we have $ \sup_{z \in E} \cH(z,\nabla \Upsilon(z)) < \infty$.
	\end{enumerate}
\end{definition}

The following result gives us the main technical input for the proof of the comparison principle.

\begin{lemma} \label{lemma:consecutive_limit_points}
	Let $u : E \rightarrow \bR$ be bounded and upper semi-continuous, let $v : E \rightarrow \bR$ be bounded and lower semi-continuous. Let $\{\Psi_1,\Psi_2\}$ be a good pair of penalization functions and let $\Upsilon : E \rightarrow \bR^+$ be a good containment function for $\cH$.
	
	\smallskip
	
	Fix $\varepsilon > 0$. Then there is a compact set $K_\varepsilon \subseteq E$ such that for every $\alpha \in (0,\infty)^2$, $\alpha = (\alpha_1,\alpha_2)$ there exist points $x_{\alpha,\varepsilon},y_{\alpha,\varepsilon} \in K_\varepsilon$, such that
	\begin{multline*}
		\frac{u(x_{\alpha,\varepsilon})}{1-\varepsilon} - \frac{v(y_{\alpha,\varepsilon})}{1+\varepsilon} - \sum_{i=1}^2  \alpha_i\Psi_i(x_{\alpha,\varepsilon},y_{\alpha,\varepsilon}) - \frac{\varepsilon}{1-\varepsilon}\Upsilon(x_{\alpha,\varepsilon}) -\frac{\varepsilon}{1+\varepsilon}\Upsilon(y_{\alpha,\varepsilon}) \\
		= \sup_{x,y \in E} \left\{\frac{u(x)}{1-\varepsilon} - \frac{v(y)}{1+\varepsilon} -  \sum_{i = 1}^2 \alpha_i \Psi_i(x,y)  - \frac{\varepsilon}{1-\varepsilon}\Upsilon(x) - \frac{\varepsilon}{1+\varepsilon}\Upsilon(y)\right\}.
	\end{multline*}
	In addition, for all $\varepsilon > 0$ and $\alpha_2 > 0$ there are limit points $x_{\alpha_{2},\varepsilon}, y_{\alpha_2,\varepsilon} \in K_\varepsilon$ of $x_{(\alpha_1,\alpha_2),\varepsilon}$ and $y_{(\alpha_1,\alpha_2),\varepsilon}$ as $\alpha_{1}\rightarrow \infty$ and we have 
	\begin{gather*}
		\lim_{\alpha_1 \rightarrow \infty}\alpha_1 \Psi_1(x_{(\alpha_{1},\alpha_2),\varepsilon},y_{(\alpha_{1},\alpha_2),\varepsilon}) = 0, \\
		\Psi_1(x_{\alpha_2,\varepsilon},y_{\alpha_2,\varepsilon}) = 0. 
	\end{gather*}
	In addition
	\begin{multline*}
		\frac{u(x_{\alpha_2,\varepsilon})}{1-\varepsilon} - \frac{v(y_{\alpha_2,\varepsilon})}{1+\varepsilon} - \alpha_2\widehat{\Psi}_2 (x_{\alpha_2,\varepsilon},y_{\alpha_2,\varepsilon}) 	- \frac{\varepsilon}{1-\varepsilon}\Upsilon(x_{\alpha_2,\varepsilon}) -\frac{\varepsilon}{1+\varepsilon}\Upsilon(y_{\alpha_2,\varepsilon}) \\
		= \sup_{x,y \in E} \left\{\frac{u(x)}{1-\varepsilon} - \frac{v(y)}{1+\varepsilon} -  \alpha_2\widehat{\Psi}_2(x,y)  - \frac{\varepsilon}{1-\varepsilon}\Upsilon(x) - \frac{\varepsilon}{1+\varepsilon}\Upsilon(y)\right\}.
	\end{multline*}
\end{lemma}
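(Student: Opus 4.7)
The plan is to follow the standard `variable doubling plus containment function' scheme of \cite{CIL92,FK06,CoKr17}, with the novelty that the two penalizations $\alpha_1 \Psi_1$ and $\alpha_2 \Psi_2$ are handled consecutively. Abbreviate
\[
F_{\alpha_1,\alpha_2}(x,y) := \frac{u(x)}{1-\varepsilon} - \frac{v(y)}{1+\varepsilon} - \alpha_1\Psi_1(x,y) - \alpha_2\Psi_2(x,y) - \frac{\varepsilon}{1-\varepsilon}\Upsilon(x) - \frac{\varepsilon}{1+\varepsilon}\Upsilon(y),
\]
which is upper semi-continuous on $\cX \times \cX$. The first step is to show that the supremum of $F_{\alpha_1,\alpha_2}$ is attained in a compact set $K_\varepsilon \times K_\varepsilon$ independent of $\alpha$; this is the main technical obstacle and the place where the defining properties of a `good pair of penalization functions' and a `good containment function' come in. The crucial observation is that at the minimum $z_0$ of $\Upsilon$ one also has $\Psi_1(z_0,z_0) = \Psi_2(z_0,z_0) = 0$, so $F_{\alpha_1,\alpha_2}(z_0,z_0) = \frac{u(z_0)}{1-\varepsilon} - \frac{v(z_0)}{1+\varepsilon}$ is a lower bound on the supremum independent of $\alpha$. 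Combined with the universal upper bound $F_{\alpha_1,\alpha_2}(x,y) \leq \frac{\|u\|_\infty}{1-\varepsilon} + \frac{\|v\|_\infty}{1+\varepsilon} - \frac{\varepsilon}{1-\varepsilon}\Upsilon(x) - \frac{\varepsilon}{1+\varepsilon}\Upsilon(y)$, this forces any maximizer into a sublevel set $K_\varepsilon = \{z \in \cX : \Upsilon(z) \leq C_\varepsilon\}$, which is compact by ($\Upsilon$c). Upper semi-continuity of $F_{\alpha_1,\alpha_2}$ on $K_\varepsilon \times K_\varepsilon$ then provides the maximizers $(x_{\alpha,\varepsilon},y_{\alpha,\varepsilon})$.

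For the limit $\alpha_1 \to \infty$ with $\alpha_2, \varepsilon$ fixed, introduce the restricted supremum
\[
R^{(\alpha_2)} := \sup\{F_{0,\alpha_2}(x,y) : \Psi_1(x,y) = 0\}.
\]
Since $F_{\alpha_1,\alpha_2} = F_{0,\alpha_2}$ on $\{\Psi_1 = 0\}$ while $\alpha_1 \Psi_1 \geq 0$ elsewhere, the map $\alpha_1 \mapsto F^{(\alpha_1,\alpha_2)} := \sup F_{\alpha_1,\alpha_2}$ is non-increasing and bounded below by $R^{(\alpha_2)}$; hence it decreases to some $F^\infty \geq R^{(\alpha_2)}$. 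Compactness of $K_\varepsilon \times K_\varepsilon$ then delivers a subsequential limit $(x_{\alpha_2,\varepsilon},y_{\alpha_2,\varepsilon})$ of the optimizers $(x_{(\alpha_1,\alpha_2),\varepsilon},y_{(\alpha_1,\alpha_2),\varepsilon})$ along any sequence $\alpha_1 \to \infty$.

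To close the argument, use the identity $\alpha_1 \Psi_1(x_{\alpha_1},y_{\alpha_1}) = F_{0,\alpha_2}(x_{\alpha_1},y_{\alpha_1}) - F^{(\alpha_1,\alpha_2)}$: the right-hand side is uniformly bounded (since $F_{0,\alpha_2}$ is bounded above on $K_\varepsilon \times K_\varepsilon$ and $F^{(\alpha_1,\alpha_2)} \geq R^{(\alpha_2)}$), so $\Psi_1(x_{\alpha_1},y_{\alpha_1}) \to 0$ and by continuity $\Psi_1(x_{\alpha_2,\varepsilon},y_{\alpha_2,\varepsilon}) = 0$. Upper semi-continuity of $F_{0,\alpha_2}$ then yields
\[
F^\infty \leq \limsup_{\alpha_1 \to \infty} F_{0,\alpha_2}(x_{\alpha_1},y_{\alpha_1}) \leq F_{0,\alpha_2}(x_{\alpha_2,\varepsilon},y_{\alpha_2,\varepsilon}) \leq R^{(\alpha_2)},
\]
so all inequalities are equalities. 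This simultaneously proves $F^\infty = R^{(\alpha_2)}$, the convergence $\alpha_1 \Psi_1(x_{\alpha_1},y_{\alpha_1}) \to 0$ (by the squeeze, applied first subsequentially and then promoted via the subsequence principle), and identifies $(x_{\alpha_2,\varepsilon},y_{\alpha_2,\varepsilon})$ as a maximizer of $F_{0,\alpha_2}$ over $\{\Psi_1 = 0\}$. Since $\widehat{\Psi}_2$ equals $\Psi_2$ on $\{\Psi_1 = 0\}$ and $+\infty$ off it, this restricted maximum is exactly the $\widehat{\Psi}_2$-supremum displayed in the lemma, concluding the argument.
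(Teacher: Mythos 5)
Your proposal is correct and follows the same overall strategy as the paper: the containment function $\Upsilon$ together with the lower bound at $(z_0,z_0)$ confines maximizers to a fixed compact sublevel set, and the $\alpha_1 \to \infty$ limit is handled by the standard penalization argument. The only difference is that the paper delegates the $\alpha_1 \to \infty$ step to an abstract doubling lemma (Lemma~\ref{lemma:abstract_doubling_lemma}, a variant of Proposition~3.7 in \cite{CIL92} and Lemma~9.2 in \cite{FK06}) applied with $F = F_{0,\alpha_2}$ and $G = \Psi_1$, whereas you reprove the content of that lemma inline via the monotonicity of $\alpha_1 \mapsto \sup F_{\alpha_1,\alpha_2}$ and the sandwich chain of inequalities.
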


\begin{remark}
	The result remains true for $\Psi_i$ and $\Upsilon$ that are lower semi-continuous instead of $C^1$.
\end{remark}

The proof of Lemma \ref{lemma:consecutive_limit_points}, carried out below, is based on the following standard result.

\begin{lemma}[Proposition 3.7 of \cite{CIL92} or Lemma 9.2 in \cite{FK06}] \label{lemma:abstract_doubling_lemma}
	Let $F : E \rightarrow \bR \cup \{-\infty\}$ be bounded above, upper semi-continuous, and such that for each $c \in \bR$ the set $\left\{(x,y) \in E^2 \, \middle| \, F(x,y) \geq c \right\}$ is compact. Let $G : E^2 \rightarrow [0,\infty]$ be lower semi-continuous and such that $x = y$ implies $G(x,y) = 0$.
	
	Then there exist for each $\alpha > 0$ variables $(x_\alpha,y_\alpha) \in E^2$ such that
	\begin{equation*}
		F(x_\alpha,y_\alpha) - \alpha G(x_\alpha,y_\alpha) = \sup_{x,y \in E} \left\{F(x,y) - \alpha G(x,y)  \right\}.
	\end{equation*}
	In addition, we have
	\begin{enumerate}[(a)]
		\item The set $\{x_{\alpha}, y_{\alpha} \, | \,  \alpha  > 0\}$ is relatively compact in $E$;
		\item Any limit point $(x_0,y_0)$ of  $(x_\alpha,y_\alpha)_{\alpha > 0}$ as $\alpha \rightarrow \infty$ satisfies $G(x_0,y_0) = 0$ and $F(x_0,y_0) = \sup_{x,y \in E, G(x,y) = 0} F(x,y)$;
		\item We have $\lim_{\alpha \rightarrow \infty} \alpha G(x_\alpha,y_\alpha) = 0$.
	\end{enumerate}
\end{lemma}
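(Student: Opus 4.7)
The plan follows the classical doubling-of-variables pattern: exploit upper semi-continuity of $F$ and lower semi-continuity of $G$ to obtain a maximizer via compactness, then use the diagonal $\{(x,x) : x \in \cX\}$ (where $G$ vanishes by hypothesis) to pin the maximizers down to a single compact set and to control the penalty term $\alpha G(x_\alpha, y_\alpha)$ as $\alpha \to \infty$.

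First I would establish existence of $(x_\alpha, y_\alpha)$ for each $\alpha > 0$. With the convention that a finite real number minus $+\infty$ equals $-\infty$, the function $H_\alpha := F - \alpha G$ is upper semi-continuous on $\cX^2$. Since $\alpha G \geq 0$, every super-level set $\{H_\alpha \geq c\}$ is a closed subset of the compact super-level set $\{F \geq c\}$, hence itself compact. Excluding the trivial case $F \equiv -\infty$, the hypothesis $G(x,x)=0$ gives $\sup H_\alpha \geq M_0 := \sup_{x \in \cX} F(x,x) > -\infty$, so compactness of super-level sets produces a maximizer $(x_\alpha, y_\alpha)$.

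Assertion (a) is then immediate: from the defining inequality $F(x_\alpha, y_\alpha) - \alpha G(x_\alpha, y_\alpha) \geq M_0$ together with $G \geq 0$ we get $F(x_\alpha, y_\alpha) \geq M_0$, which confines every maximizer to the single compact set $\{F \geq M_0\}$. For (b), I would fix a limit point $(x_0, y_0)$ of a subsequence $(x_{\alpha_k}, y_{\alpha_k})$ with $\alpha_k \to \infty$. Rearranging the same inequality yields $G(x_{\alpha_k}, y_{\alpha_k}) \leq \alpha_k^{-1}(\sup F - M_0) \to 0$, so lower semi-continuity of $G$ forces $G(x_0, y_0) = 0$. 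To identify the value $F(x_0, y_0)$, compare against any $(x^*, y^*)$ with $G(x^*, y^*) = 0$: optimality of $(x_{\alpha_k}, y_{\alpha_k})$ gives $F(x_{\alpha_k}, y_{\alpha_k}) \geq F(x^*, y^*) + \alpha_k G(x_{\alpha_k}, y_{\alpha_k}) \geq F(x^*, y^*)$, and upper semi-continuity of $F$ passes this to $F(x_0, y_0) \geq F(x^*, y^*)$; taking the supremum over such $(x^*, y^*)$ establishes the required equality.

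For (c), rewriting optimality against a fixed $(x^*, y^*)$ with $G(x^*, y^*) = 0$ gives $\alpha G(x_\alpha, y_\alpha) \leq F(x_\alpha, y_\alpha) - F(x^*, y^*)$; taking the supremum over admissible $(x^*,y^*)$ on the right reduces the claim to $\limsup_{\alpha \to \infty} F(x_\alpha, y_\alpha) \leq \sup_{G=0} F$. This follows from a standard subsequence extraction: along any sequence $\alpha_n \to \infty$ realising the $\limsup$, part (a) yields a convergent subsequence, part (b) identifies its limit as a point where $F$ equals $\sup_{G=0} F$, and upper semi-continuity caps the $\limsup$ at this value. The main obstacle in executing this plan is only bookkeeping around the $\pm \infty$ values of $F$ and $G$: care must be taken that the rearrangements $F(x_\alpha, y_\alpha) - \alpha G(x_\alpha, y_\alpha) \geq F(x^*, y^*) - \alpha G(x^*, y^*)$ are applied only at test points where $F > -\infty$ and $G < \infty$, which is guaranteed by the standing non-triviality hypothesis. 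Apart from this, each step is a direct consequence of semi-continuity and the compact super-level set condition on $F$.
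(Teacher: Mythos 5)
The paper does not actually prove this lemma; it is stated as a variation of Proposition 3.7 in \cite{CIL92} and Lemma 9.2 in \cite{FK06}, so there is no in-text argument to compare against. Your proof is correct and is essentially the standard one: existence of maximizers from upper semi-continuity of $F-\alpha G$ together with compact super-level sets inherited from $F$; confinement of all maximizers to $\{F\geq M_0\}$ via the diagonal test point; and the chain $G(x_0,y_0)=0$, $F(x_0,y_0)=\sup_{G=0}F$, $\alpha G(x_\alpha,y_\alpha)\to 0$ by comparing against points where $G$ vanishes. Two small remarks. First, the non-degeneracy you need is $M_0=\sup_x F(x,x)>-\infty$, which is slightly stronger than $F\not\equiv-\infty$ (one could have $F=-\infty$ on the diagonal but not elsewhere); this is harmless here since in the application $F$ is finite everywhere, but it is the hypothesis you should state. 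Second, for part (c) the classical route (as in \cite{CIL92}) exploits that $M_\alpha:=\sup(F-\alpha G)$ is nonincreasing with a finite limit and bounds $\tfrac{\alpha}{2}G(x_\alpha,y_\alpha)\leq M_{\alpha/2}-M_\alpha\to 0$; your subsequence argument via (a) and (b) reaches the same conclusion and is equally valid.
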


\begin{proof}[Proof of Lemma \ref{lemma:consecutive_limit_points}]
	As $u,v$ are bounded and the $\Psi_i$ are bounded from below, we find using the semi-continuity properties of all functions involved, and the compact level sets of $\Upsilon$, that there is a compact set $K_\varepsilon \subseteq E$ and variables $x_{\alpha,\varepsilon},y_{\alpha,\varepsilon} \in K_\varepsilon$ as in the first claim of the lemma. 
	
	The second statement follows from Lemma \ref{lemma:abstract_doubling_lemma} by taking for $\alpha$ the variable $\alpha_1$, and for $F$ and $G$ the functions
	\begin{align*}
		F(x,y) &: = \left\{\frac{u(x)}{1-\varepsilon} - \frac{v(y)}{1+\varepsilon} -  \alpha_2\Psi_2(x,y)  - \frac{\varepsilon}{1-\varepsilon}\Upsilon(x) - \frac{\varepsilon}{1+\varepsilon}\Upsilon(y)\right\}, \\
		G(x,y) & := \Psi_1(x,y).
	\end{align*}
\end{proof}

The following result gives us the explicit condition that can be used to verify the comparison principle.

\begin{proposition} \label{proposition:comparison_conditions_on_H}
	Let $H : \cD(H) \subseteq C_b(E) \rightarrow C_b(E)$ have domain $\cD(H)$ satisfying $C_c^\infty(E) \subseteq \cD(H) \subseteq C_b^1(E)$ and be of the form $Hf(x) = \cH(x,\nabla f(x))$. Assume that the map $\cH : E \times \bR^d \rightarrow \bR$ is continuous and that for each $x \in E$ the map $p \mapsto \cH(x,p)$ is convex.
	
	Fix $\lambda >0$, $h \in C_b(E)$ and consider $u$ and $v$ sub- and supersolution to $f - \lambda Hf = h$.
	
	\smallskip
	
	Let $k \in \bN \setminus \{0\}$ and let $\{\Psi_1,\Psi_2\}$ be a pair of good penalization functions and $\Upsilon$ be a good containment function. Moreover, for every $\alpha = (\alpha_1,\alpha_2) \in (0,\infty)^2$ and $\varepsilon >0$ let $x_{\alpha,\varepsilon},y_{\alpha,\varepsilon} \in E$ be such that
	\begin{multline} \label{eqn:comparison_principle_proof_choice_of_sequences}
		\frac{u(x_{\alpha,\varepsilon})}{1-\varepsilon} - \frac{v(y_{\alpha,\varepsilon})}{1+\varepsilon} -  \sum_{i=1}^2 \alpha_i \Psi_i(x_{\alpha,\varepsilon},y_{\alpha,\varepsilon}) - \frac{\varepsilon}{1-\varepsilon}\Upsilon(x_{\alpha,\varepsilon}) -\frac{\varepsilon}{1+\varepsilon}\Upsilon(y_{\alpha,\varepsilon}) \\
		= \sup_{x,y \in E} \left\{\frac{u(x)}{1-\varepsilon} - \frac{v(y)}{1+\varepsilon} - \sum_{i=1}^2 \alpha_i \Psi_i(x,y)  - \frac{\varepsilon}{1-\varepsilon}\Upsilon(x) - \frac{\varepsilon}{1+\varepsilon}\Upsilon(y)\right\}.
	\end{multline}
	
	Suppose that
	\begin{multline}\label{condH:negative:liminf}
		\liminf_{\varepsilon \rightarrow 0} \liminf_{\alpha_2 \rightarrow \infty}  \liminf_{\alpha_1 \rightarrow \infty} \cH\left(x_{\alpha,\varepsilon},\sum_{i=1}^2 \alpha_i \nabla \Psi_i(\cdot,y_{\alpha,\varepsilon})(x_{\alpha,\varepsilon})\right) \\
		- \cH\left(y_{\alpha,\varepsilon},- \sum_{i=1}^2 \alpha_i \nabla \Psi_i(x_{\alpha,\varepsilon},\cdot)(y_{\alpha,\varepsilon})\right) \leq 0,
	\end{multline}
	then $u \leq v$. In other words: $f - \lambda H f = h$ satisfies the comparison principle. 
\end{proposition}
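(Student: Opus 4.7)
I would carry out a doubling-of-variables argument for viscosity solutions, using the two-stage penalization from Lemma \ref{lemma:consecutive_limit_points}. The defining relation \eqref{eqn:comparison_principle_proof_choice_of_sequences} says that, with $y_{\alpha,\varepsilon}$ frozen, the map $x \mapsto u(x) - (1-\varepsilon)\sum_i \alpha_i \Psi_i(x, y_{\alpha,\varepsilon}) - \varepsilon \Upsilon(x)$ attains its (finite) supremum at $x_{\alpha,\varepsilon}$; inserting this expression as a test function in the subsolution inequality (its admissibility as an element of $\cD(H)$ being ensured by a standard cutoff outside the compact set $K_\varepsilon$) yields
\[
u(x_{\alpha,\varepsilon}) \leq \lambda\, \cH\!\left(x_{\alpha,\varepsilon},\, (1-\varepsilon) p_x + \varepsilon \nabla \Upsilon(x_{\alpha,\varepsilon})\right) + h(x_{\alpha,\varepsilon}),
\]
where $p_x := \sum_i \alpha_i \nabla_x \Psi_i(x_{\alpha,\varepsilon}, y_{\alpha,\varepsilon})$. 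The symmetric supersolution inequality at $y_{\alpha,\varepsilon}$ gives
\[
v(y_{\alpha,\varepsilon}) \geq \lambda\, \cH\!\left(y_{\alpha,\varepsilon},\, -(1+\varepsilon) p_y - \varepsilon \nabla \Upsilon(y_{\alpha,\varepsilon})\right) + h(y_{\alpha,\varepsilon}),
\]
with $p_y := \sum_i \alpha_i \nabla_y \Psi_i(x_{\alpha,\varepsilon}, y_{\alpha,\varepsilon})$.

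\textbf{Disentangling the containment perturbation via convexity.} Write $C := \sup_z \cH(z, \nabla \Upsilon(z)) < \infty$ (property $(\Upsilon d)$). Convexity of $p \mapsto \cH(x,p)$ gives directly the subsolution-side estimate
\[
\cH\!\left(x_{\alpha,\varepsilon},\, (1-\varepsilon) p_x + \varepsilon \nabla\Upsilon(x_{\alpha,\varepsilon})\right) \leq (1-\varepsilon)\cH(x_{\alpha,\varepsilon}, p_x) + \varepsilon C,
\]
and, after writing $-p_y$ as the convex combination $\tfrac{1}{1+\varepsilon}\bigl[-(1+\varepsilon) p_y - \varepsilon \nabla\Upsilon(y_{\alpha,\varepsilon})\bigr] + \tfrac{\varepsilon}{1+\varepsilon}\nabla\Upsilon(y_{\alpha,\varepsilon})$, the supersolution-side estimate
\[
\cH\!\left(y_{\alpha,\varepsilon},\, -(1+\varepsilon) p_y - \varepsilon \nabla\Upsilon(y_{\alpha,\varepsilon})\right) \geq (1+\varepsilon)\cH(y_{\alpha,\varepsilon}, -p_y) - \varepsilon C.
\]
Dividing the sub/super-solution inequalities by $1-\varepsilon$ and $1+\varepsilon$ respectively and subtracting,
\begin{multline*}
\frac{u(x_{\alpha,\varepsilon})}{1-\varepsilon} - \frac{v(y_{\alpha,\varepsilon})}{1+\varepsilon} \leq \lambda\bigl[\cH(x_{\alpha,\varepsilon}, p_x) - \cH(y_{\alpha,\varepsilon}, -p_y)\bigr] \\
+ \lambda C\left(\tfrac{\varepsilon}{1-\varepsilon} + \tfrac{\varepsilon}{1+\varepsilon}\right) + \tfrac{h(x_{\alpha,\varepsilon})}{1-\varepsilon} - \tfrac{h(y_{\alpha,\varepsilon})}{1+\varepsilon}.
\end{multline*}

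\textbf{Iterated limit and conclusion.} The matching lower bound on the left-hand side comes from evaluating \eqref{eqn:comparison_principle_proof_choice_of_sequences} at $x = y = z$ and using $\Psi_i(z,z) = 0$: for every $z \in \cX$,
\[
\frac{u(z)}{1-\varepsilon} - \frac{v(z)}{1+\varepsilon} - \tfrac{2\varepsilon}{1-\varepsilon^2}\Upsilon(z) \leq \frac{u(x_{\alpha,\varepsilon})}{1-\varepsilon} - \frac{v(y_{\alpha,\varepsilon})}{1+\varepsilon}.
\]
Now take successive liminfs $\alpha_1 \to \infty$, $\alpha_2 \to \infty$, $\varepsilon \to 0$: by Lemma \ref{lemma:consecutive_limit_points}, choosing the subsequences suitably, $(x_{\alpha,\varepsilon}, y_{\alpha,\varepsilon})$ remains in $K_\varepsilon$ and converges first to a point with $\Psi_1 = 0$ and then to one with both $\Psi_1 = 0$ and $\Psi_2 = 0$ (via $\widehat{\Psi}_2 = 0$), hence to a diagonal point $x_\varepsilon = y_\varepsilon$; continuity of $h$ kills the $h$-difference along this subsequence, assumption \eqref{condH:negative:liminf} bounds the Hamiltonian difference by $0$ in the triple limit, and sending $\varepsilon \to 0$ annihilates the remaining $\varepsilon C$ and $\varepsilon \Upsilon(z)$ error terms. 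Hence $u(z) \leq v(z)$ for arbitrary $z \in \cX$.

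\textbf{Main obstacle.} The decisive step is the convexity manipulation: for the non-Lipschitz exponential Hamiltonians one has in mind, one cannot afford to leave the $\varepsilon \nabla \Upsilon$ perturbation inside the argument of $\cH$, and the specific convex-combination identity for $-p_y$ together with property $(\Upsilon d)$ is exactly what isolates the ``clean'' Hamiltonian difference $\cH(x, p_x) - \cH(y, -p_y)$ that assumption \eqref{condH:negative:liminf} is designed to control. A secondary, more technical, point is verifying that the functions $f_1, f_2$ above can be used as admissible test functions despite $\cD(H) \subsetneq C_b^1$, which is handled by a cutoff argument licensed by the compactness of $K_\varepsilon$ provided by Lemma \ref{lemma:consecutive_limit_points}.
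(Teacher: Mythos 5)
Your argument is correct and follows essentially the same route as the paper's. You have unpacked, inline, the content of Lemma \ref{lemma:establish_first_bound_viscosity_argument} (which the paper imports as a black box from \cite{CoKr17}, Proposition A.9): insert the penalization-plus-containment functions as test functions in the sub- and supersolution inequalities, use convexity of $p \mapsto \cH(x,p)$ together with property $(\Upsilon\text{d})$ to peel off the $\varepsilon\nabla\Upsilon$ perturbations, and arrive at the bound whose first line is the $h$-difference, second line the $\varepsilon$-weighted containment terms, and third line the Hamiltonian difference that hypothesis \eqref{condH:negative:liminf} controls. The one place you wave your hands — admissibility of the unbounded test functions, handled by "a standard cutoff outside $K_\varepsilon$" — is precisely what the paper formalizes via the viscosity sub/super-extension Lemma \ref{lemma:viscosity_extension} (whose proof is in fact a smooth-truncation argument), so your shortcut is sound. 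A minor stylistic difference: you work pointwise, bounding $u(z) - v(z)$ for an arbitrary $z$ by evaluating \eqref{eqn:comparison_principle_proof_choice_of_sequences} at $(z,z)$, whereas the paper bounds $\sup_x u(x) - v(x)$ directly; the two are equivalent, and your variant avoids the slightly delicate identity $\sup_x u(x)-v(x) = \lim_{\varepsilon\to 0}\sup_x \tfrac{u(x)}{1-\varepsilon}-\tfrac{v(x)}{1+\varepsilon}$ that the cited lemma relies on.
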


\begin{proof} 
	Using the convexity of $H$ and the definitions of sub- and supersolutions, we find by Lemma \ref{lemma:establish_first_bound_viscosity_argument} that
	\begin{align} 
		& \sup_x u(x) - v(x) \nonumber\\
		& \leq \frac{h(x_{\alpha,\varepsilon})}{1 - \varepsilon} - \frac{h(y_{\alpha,\varepsilon})}{1+\varepsilon}  \label{eqn:eqn:comp_proof_final_bound:line1}\\
		& \qquad + \frac{\varepsilon}{1-\varepsilon}\cH(x_{\alpha,\varepsilon}, \nabla \Upsilon(x_{\alpha,\varepsilon})) + \frac{\varepsilon}{1+\varepsilon}\cH(y_{\alpha,\varepsilon}, \nabla\Upsilon(y_{\alpha,\varepsilon})) \label{eqn:eqn:comp_proof_final_bound:line2}\\
		& \qquad +  \lambda \left[\cH\left(x_{\alpha,\varepsilon}, \sum_{i=1}^2 \alpha_i\nabla\Psi_i(\cdot,y_{\alpha,\varepsilon})(x_{\alpha,\varepsilon})\right) - \cH\left(y_{\alpha,\varepsilon},- \sum_{i=1}^2 \alpha_i\nabla\Psi_i(x_{\alpha,\varepsilon},\cdot)(y_{\alpha,\varepsilon})\right)\right].  \label{eqn:eqn:comp_proof_final_bound:line3}
	\end{align}
	Consecutively taking $\liminf$ over $\alpha_1,\alpha_2,\varepsilon$, the term \eqref{eqn:eqn:comp_proof_final_bound:line3} vanishes by assumption. The term in \eqref{eqn:eqn:comp_proof_final_bound:line2} vanishes as well, due to the uniform bounds on $\cH(z,\nabla \Upsilon(z))$ by property ($\Upsilon$d) of Definition \ref{definition:good_containment_function}. Consecutively taking limit points as in Lemma \ref{lemma:consecutive_limit_points} by sending $\alpha_1$, then $\alpha_2$ to infinity, we find a pair $(x_\varepsilon,y_\varepsilon)$ with $\Psi_1(x_\varepsilon,y_\varepsilon) = \Psi_2(x_\varepsilon,y_\varepsilon) = 0$. This implies $x_\varepsilon = y_\varepsilon$. Thus, taking $\liminf$ over $\alpha_1$ and $\alpha_2$ the term in \eqref{eqn:eqn:comp_proof_final_bound:line1} is bounded above by 
	\begin{equation*}
		\sup_{z \in K_\varepsilon} \frac{h(z)}{1-\varepsilon} - \frac{h(z)}{1+\varepsilon} \leq \frac{2 \varepsilon}{1 - \varepsilon^2} \vn{h},
	\end{equation*}
	which vanishes if $\varepsilon \rightarrow 0$. We conclude that the comparison principle holds for $f - \lambda H f = h$.
\end{proof}

The next lemma establishes that the Hamiltonian applied to the pair of penalization functions is either bounded below or above. Using coercivity properties of $\cH$, this allows us to derive properties of the sequences $(x_{\alpha,\varepsilon},y_{\alpha,\varepsilon})$ that can be used afterwards to help the the  verification of \eqref{condH:negative:liminf}.

\begin{lemma} \label{lemma:control_on_H}
	Let $H : \cD(H) \subseteq C_b(E) \rightarrow C_b(E)$ with domain $\cD(H)$ satisfying $C_c^\infty(E) \subseteq \cD(H) \subseteq C_b^1(E)$ and such that $Hf(x) = \cH(x,\nabla f(x))$. Assume that the map $\cH : E \times \bR^d \rightarrow \bR$ is continuous and that for each $x \in E$ the map $p \mapsto \cH(x,p)$ is convex.
	
	Let $h \in C_b(E)$ and $\lambda > 0$ and let $u$ be a subsolution and $v$ a supersolution to $f - \lambda H = h$. 
	
	Let $\{\Psi_1,\Psi_2\}$ be a pair of good penalization functions and $\Upsilon$ be a good containment function. Moreover, for every $\alpha = (\alpha_1,\alpha_2) \in (0,\infty)^2$ and $\varepsilon >0$ let $x_{\alpha,\varepsilon},y_{\alpha,\varepsilon} \in E$ be as in \eqref{eqn:comparison_principle_proof_choice_of_sequences}. Then we have that
	\begin{align} 
		& \sup_{\varepsilon, \alpha} \cH\left(y_{\alpha,\varepsilon}, - \sum_{i=1}^2\alpha_i (\nabla \Psi_i(x_{\alpha,\varepsilon},\cdot))(y_{\alpha,\varepsilon})\right) < \infty, \label{eqn:control_on_H_sup} \\
		& \inf_{\varepsilon, \alpha} \cH\left(x_{\alpha,\varepsilon},  \sum_{i=1}^2\alpha_i (\nabla \Psi_i(\cdot,y_{\alpha,\varepsilon}))(x_{\alpha,\varepsilon})\right) > -\infty. \label{eqn:control_on_H_inf} 
	\end{align}
\end{lemma}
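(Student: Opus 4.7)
The plan is to extract both bounds directly from the sub- and supersolution properties of $u$ and $v$ at the doubled-variable optimizers $x_{\alpha,\varepsilon}$, $y_{\alpha,\varepsilon}$, combined with the convexity of $p \mapsto \cH(z,p)$. For \eqref{eqn:control_on_H_sup}, fix the first argument in \eqref{eqn:comparison_principle_proof_choice_of_sequences} to $x_{\alpha,\varepsilon}$ and consider
\begin{equation*}
y \mapsto v(y) - \varphi(y), \qquad \varphi(y) := -(1+\varepsilon)\sum_{i=1}^{2}\alpha_{i}\Psi_{i}(x_{\alpha,\varepsilon},y) - \varepsilon\,\Upsilon(y),
\end{equation*}
which by construction attains its infimum at $y_{\alpha,\varepsilon}$. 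The supersolution inequality then gives
\begin{equation*}
\lambda\,\cH\bigl(y_{\alpha,\varepsilon},\nabla\varphi(y_{\alpha,\varepsilon})\bigr) \leq v(y_{\alpha,\varepsilon}) - h(y_{\alpha,\varepsilon}).
\end{equation*}
The key observation is that
\begin{equation*}
-\sum_{i=1}^{2}\alpha_{i}(\nabla\Psi_{i}(x_{\alpha,\varepsilon},\cdot))(y_{\alpha,\varepsilon}) = \tfrac{1}{1+\varepsilon}\,\nabla\varphi(y_{\alpha,\varepsilon}) + \tfrac{\varepsilon}{1+\varepsilon}\,\nabla\Upsilon(y_{\alpha,\varepsilon})
\end{equation*}
is a genuine convex combination; convexity of $\cH(y_{\alpha,\varepsilon},\cdot)$ then bounds the expression inside the supremum in \eqref{eqn:control_on_H_sup} by $\frac{\vn{v}+\vn{h}}{\lambda(1+\varepsilon)} + \frac{\varepsilon}{1+\varepsilon}\sup_{z \in \cX}\cH(z,\nabla\Upsilon(z))$, which is uniformly finite by ($\Upsilon$d) and the boundedness of $v$ and $h$.

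The bound \eqref{eqn:control_on_H_inf} is obtained symmetrically: the map $x \mapsto u(x) - \psi(x)$ with
\begin{equation*}
\psi(x) := (1-\varepsilon)\sum_{i=1}^{2}\alpha_{i}\Psi_{i}(x,y_{\alpha,\varepsilon}) + \varepsilon\,\Upsilon(x)
\end{equation*}
attains its supremum at $x_{\alpha,\varepsilon}$, so the subsolution property yields $\lambda\cH(x_{\alpha,\varepsilon},\nabla\psi(x_{\alpha,\varepsilon})) \geq u(x_{\alpha,\varepsilon}) - h(x_{\alpha,\varepsilon})$. This time $\nabla\psi(x_{\alpha,\varepsilon})$ is already the convex combination $(1-\varepsilon)\sum_{i}\alpha_{i}(\nabla\Psi_{i}(\cdot,y_{\alpha,\varepsilon}))(x_{\alpha,\varepsilon}) + \varepsilon\nabla\Upsilon(x_{\alpha,\varepsilon})$, so convexity and rearrangement give
\begin{equation*}
\cH\!\left(x_{\alpha,\varepsilon},\sum_{i=1}^{2}\alpha_{i}(\nabla\Psi_{i}(\cdot,y_{\alpha,\varepsilon}))(x_{\alpha,\varepsilon})\right) \geq \tfrac{1}{1-\varepsilon}\!\left[\tfrac{-\vn{u}-\vn{h}}{\lambda} - \varepsilon\sup_{z \in \cX}\cH(z,\nabla\Upsilon(z))\right],
\end{equation*}
uniformly bounded below in $\alpha$ and in $\varepsilon \in (0,1)$---the relevant regime, since Proposition \ref{proposition:comparison_conditions_on_H} sends $\varepsilon \to 0$ last.

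The main technical nuisance I expect is that $\varphi$ and $\psi$ need not lie in $\cD(H) \subseteq C_b^1(\cX)$, as the penalizations and the containment function are typically unbounded on $\cX$. This is the standard cutoff issue: because the first part of Lemma \ref{lemma:consecutive_limit_points} confines the optimizers to a compact set $K_{\varepsilon}$, one may replace $\varphi$ and $\psi$ outside a neighbourhood of $K_{\varepsilon}$ by bounded $C^{1}$ functions that agree on $K_\varepsilon$; their values and gradients at the optimizers are then preserved, and the sub-/supersolution inequalities apply verbatim to the modified test functions.
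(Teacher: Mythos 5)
Your core argument is exactly the paper's: identify $y_{\alpha,\varepsilon}$ as the minimizer of $y\mapsto v(y)-\varphi(y)$ (and $x_{\alpha,\varepsilon}$ as the maximizer of $x\mapsto u(x)-\psi(x)$), apply the supersolution/subsolution inequality there, and peel off the $\Upsilon$-contribution by writing the target gradient as a convex combination of $\nabla\varphi$ and $\nabla\Upsilon$. The algebra, the coefficients in $\varphi$ and $\psi$, and the resulting bounds are all correct, and the constants you obtain match the paper's $\frac{1}{1+\varepsilon}\bigl(\frac{\vn{v-h}}{\lambda}+\varepsilon\sup_z\cH(z,\nabla\Upsilon(z))\bigr)$ up to trivial rearrangement.

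Where your proposal differs is in how you justify applying the supersolution inequality to the unbounded test function $\varphi$. The paper does this via the viscosity super-extension $H_\ddagger$ of Lemma~\ref{lemma:viscosity_extension}, whose proof uses a \emph{monotone} truncation $\phi_n\circ(\cdot)$ and Lemma~7.7 of \cite{FK06}. You instead propose an ad hoc cutoff: replace $\varphi$ outside a neighbourhood of $K_\varepsilon$ by a bounded $C^1$ function $\tilde\varphi$ agreeing with $\varphi$ on $K_\varepsilon$. As stated, this leaves a genuine gap. Agreement of $\tilde\varphi$ and $\varphi$ on $K_\varepsilon$ preserves the \emph{value} and \emph{gradient} of the test function at $y_{\alpha,\varepsilon}$, but it does not by itself guarantee that $y_{\alpha,\varepsilon}$ remains the global minimizer of $v-\tilde\varphi$ — and the supersolution inequality only yields information along sequences realizing the infimum. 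Indeed, since $\Upsilon$ has compact sublevel sets, $\varphi(y)\to-\infty$ as $y$ leaves compacta; so any bounded $\tilde\varphi$ must eventually lie strictly above $\varphi$, and there $v-\tilde\varphi < v-\varphi$, which can push the infimum away from $y_{\alpha,\varepsilon}$. The fix is to be quantitative: using boundedness of $v$, pick a sublevel set $K'\supseteq K_\varepsilon$ of $\Upsilon$ so large that $\varphi\le -\vn{v}-\bigl(v(y_{\alpha,\varepsilon})-\varphi(y_{\alpha,\varepsilon})\bigr)$ on $\cX\setminus K'$, set $\tilde\varphi=\varphi$ on $K'$, and continue $\tilde\varphi$ outward so that it stays at or below that constant; then $v-\tilde\varphi\ge v(y_{\alpha,\varepsilon})-\tilde\varphi(y_{\alpha,\varepsilon})$ everywhere and $y_{\alpha,\varepsilon}$ remains a global minimizer. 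This is essentially what the monotone truncations in Lemma~\ref{lemma:viscosity_extension} achieve systematically (approximating from above in the supersolution case), and it is the one missing step in your proposal; once it is filled, the argument is complete.
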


The proof is an adaptation of Lemma 9.3 in \cite{FK06}. For a similar version in compact setting see Lemma 5 in \cite{Kr16b}.

\begin{proof} 
	We only prove the first statement, the second can be proven similarly. Using that $v$ is a supersolution to $f - \lambda H f = h$, we find that it is a supersolution to the equation $f - \lambda H_\ddagger f = h$, where $H_\ddagger$ is a super-extension of $H$ that includes functions of the type $y \mapsto (-(1+\varepsilon) \sum_{i=1}^2\alpha_i \Psi_i(x,y)- \varepsilon \Upsilon(y)$ in its domain, see Lemma \ref{lemma:viscosity_extension}. It follows that for the points $(x_{\alpha,\varepsilon},y_{\alpha,\varepsilon})$, we have
	\begin{multline*}
		\cH\left(y_{\alpha,\varepsilon}, - (1+\varepsilon) \sum_{i=1}^2 \alpha_i \nabla \Psi_i(x_{\alpha,\varepsilon},\cdot)(y_{\alpha,\varepsilon}) - \varepsilon \nabla \Upsilon(y_{\alpha,\varepsilon})\right) \\
		\leq \frac{v(y_{\alpha,\varepsilon}) - h(y_{\alpha,\varepsilon})}{\lambda} \leq  \frac{\vn{v-h}}{\lambda}.
	\end{multline*}
	By the convexity of $p \mapsto \cH(x,p)$, we find
	\begin{multline*}
		\cH\left(y_{\alpha,\varepsilon}, -\sum_{i=1}^2\alpha_i \nabla \Psi_i(x_{\alpha,\varepsilon},\cdot)(y_{\alpha,\varepsilon})\right) \\
		\leq \frac{1}{1+\varepsilon} \cH\left(y_{\alpha,\varepsilon},-(1+\varepsilon)\sum_{i=1}^2\alpha_i \nabla \Psi_i(x_{\alpha,\varepsilon},\cdot)(y_{\alpha,\varepsilon}) - \varepsilon \nabla \Upsilon(y_{\alpha,\varepsilon})\right) \\
		+ \frac{\varepsilon}{1+\varepsilon}  \cH\left(y_{\alpha,\varepsilon}, \nabla \Upsilon(y_{\alpha,\varepsilon})\right),
	\end{multline*}
	which implies
	\begin{multline*}
		\sup_{\alpha} \cH\left(y_{\alpha,\varepsilon}, - \sum_{i=1}^2 \alpha_i (\nabla \Psi_i(x_{\alpha,\varepsilon},\cdot))(y_{\alpha,\varepsilon})\right) \\
		\leq \frac{1}{1+\varepsilon}\left( \frac{\vn{v-h}}{\lambda} + \varepsilon \sup_{z} \cH(z,\nabla \Upsilon(z))\right) < \infty.
	\end{multline*}
	Taking the supremum over $\varepsilon$ yields the final claim.
\end{proof}

\subsection{The verification of the comparison principle for our explicit Hamiltonian} \label{section:comparison_explicit}

We now turn to the verification of Theorem \ref{theorem:comparison_principle}, that is, the verification of the comparison principle for the Hamilton-Jacobi equation with Hamiltonians of the type
\begin{equation*}
	\cH((\mu,w),p) = \sum_{(a,b) \in \Gamma} v(a,b,\mu)\left[\exp\left\{p_b - p _a + p_{(a,b)} \right\} - 1 \right].
\end{equation*}
To obtain the comparison principle, we aim to apply Proposition \ref{proposition:comparison_conditions_on_H}. To do so, we first need to choose a pair of good penalization functions $(\Psi_1,\Psi_2)$ and a good containment function $\Upsilon$. This is the first thing we will do in the section. Afterwards, we verify \eqref{condH:negative:liminf} which is the key hypothesis of Proposition \ref{proposition:comparison_conditions_on_H}. 

\smallskip

We start by construction of a good containment function.

\begin{lemma} \label{lemma:explicit_good_containment_function}
	Consider $\cH :  E \times  \bR^d \rightarrow \bR$ given by
	\begin{equation*}
		\cH((\mu,w),p) =  \sum_{(a,b) \in \Gamma} v(a,b,\mu)\left[\exp\left\{p_b - p _a + p_{(a,b)} \right\} - 1 \right]
	\end{equation*}
	where $v : \Gamma \times \cP(\{1,\dots,q\})$ is continuous and non-negative.
	
	Then the function $\Upsilon(\mu,w) = \sum_{(a,b) \in \Gamma} \log \left( 1 + w_{(a,b)}\right)$ is a good containment function for $\cH$.
\end{lemma}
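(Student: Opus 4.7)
The plan is to verify the four defining properties of a good containment function from Definition \ref{definition:good_containment_function} in turn; none of them require a clever idea, but the key observation for property ($\Upsilon$d) is that the gradient of $\Upsilon$ has no $\mu$-component, which kills the $p_b-p_a$ contributions in $\cH$, and that its $w$-component is bounded by $1$.

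For ($\Upsilon$a), the inequality $\log(1+s)\geq 0$ for $s\geq 0$ gives $\Upsilon\geq 0$, and any point of the form $z_0=(\mu,0)$ with $\mu\in\cP(\{1,\dots,q\})$ satisfies $\Upsilon(z_0)=0$. Property ($\Upsilon$b) is immediate from the smoothness of $s\mapsto \log(1+s)$ on $[0,\infty)$. For ($\Upsilon$c), I would note that $\Upsilon(\mu,w)\leq c$ forces $\log(1+w_{(a,b)})\leq c$, hence $w_{(a,b)}\leq e^c-1$ for every $(a,b)\in\Gamma$, so the sublevel set is contained in the product of the compact simplex $\cP(\{1,\dots,q\})$ and the compact box $[0,e^c-1]^\Gamma$; upper semicontinuity (in fact continuity) of $\Upsilon$ makes this sublevel set closed, and thus compact.

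The only computation is ($\Upsilon$d). Using the parametrization of Remark \ref{remark:dimensionality}, $\Upsilon$ does not depend on $\mu$, so its gradient with respect to the $\mu$-coordinates vanishes; equivalently, writing $p=\nabla\Upsilon(\mu,w)$ in the notation of \eqref{eqn:Hamiltonian} we have $p_a=0$ for every $a\in\{1,\dots,q\}$, and
\begin{equation*}
p_{(a,b)}=\frac{\partial \Upsilon}{\partial w_{(a,b)}}(\mu,w)=\frac{1}{1+w_{(a,b)}}\in (0,1].
\end{equation*}
Substituting into \eqref{eqn:Hamiltonian} gives
\begin{equation*}
\cH\bigl((\mu,w),\nabla\Upsilon(\mu,w)\bigr)=\sum_{(a,b)\in\Gamma} v(a,b,\mu)\left[\exp\!\left\{\tfrac{1}{1+w_{(a,b)}}\right\}-1\right]\leq (e-1)\sum_{(a,b)\in\Gamma} v(a,b,\mu).
\end{equation*}
Since $v$ is continuous on the compact set $\Gamma\times\cP(\{1,\dots,q\})$, the right-hand side is bounded by $(e-1)|\Gamma|\,\|v\|_\infty<\infty$, uniformly in $(\mu,w)$. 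This verifies ($\Upsilon$d) and completes the proof. There is no real obstacle; the main thing to get right is the bookkeeping caused by the reduced parametrization of $E$, which is handled by observing that $\Upsilon$ is a function of $w$ alone, so the $\mu$-momentum differences $p_b-p_a$ in $\cH$ do not contribute.
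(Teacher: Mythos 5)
Your proof is correct and follows the same approach as the paper: the only substantive step is ($\Upsilon$d), and you compute the gradient of $\Upsilon$ (noting it has no $\mu$-component so $p_b-p_a$ drops out, with $p_{(a,b)}=1/(1+w_{(a,b)})\le 1$) and bound $\cH((\mu,w),\nabla\Upsilon(\mu,w))$ by $(e-1)\sum_{(a,b)}v(a,b,\mu)$, exactly as the paper does. The remaining properties ($\Upsilon$a)–($\Upsilon$c) are verified by the same elementary observations, which the paper treats more tersely but identically in substance.
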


\begin{proof}
	As $\cP(\{1,\dots,q\})$ is compact and $x \mapsto \log(1+x)$ has compact level sets on $\bR^+$ the map $\Upsilon$ has compact level sets in $E$ also. Clearly $\Upsilon$ is smooth. Thus, it suffices to show $\sup_{\mu,w} \cH((\mu,w),\nabla \Upsilon(\mu,w)) < \infty$:
	\begin{align*}
		\cH((\mu,w), \nabla \Upsilon(\mu,w)) & =  \sum_{(a,b) \in \Gamma} v(a,b,\mu)\left[\exp\left\{(1 + w_{(a,b)})^{-1} \right\} - 1 \right] \\
		& \leq  \sum_{(a,b) \in \Gamma} v(a,b,\mu)\left[\exp\left\{1 \right\} - 1 \right].
	\end{align*}
	The claim follows as $v$ is continuous, and, therefore, bounded.
\end{proof}

We proceed by constructing a pair of good penalization functions. For $\Psi_1$ we use a version of the quadratic distance on the space of measures which was first introduced in \cite{Kr16b}. For $\Psi_2$ we use a standard quadratic distance on the space of fluxes. The exact definition follows.

For $x \in \bR$, let $x^- := x \wedge 0$ and $x^+ = x \vee 0$.

\begin{lemma} \label{lemma:explicit_good_penalization_functions}
	Define $\Psi_1,\Psi_2$ by
	\begin{align*}
		\Psi_1(\mu,\hat{\mu}) & = \frac{1}{2} \sum_{a} ((\mu(a) - \hat{\mu}(a))^-)^2 = \frac{1}{2} \sum_{a} ((\hat{\mu}(a) - \mu(a))^+)^2, \\
		\Psi_2(w,\hat{w}) & := \frac{1}{2} \sum_{(a,b) \in \Gamma} (w_{(a,b)} - \hat{w}_{(a,b)})^2.
	\end{align*}
	The pair $(\Psi_1,\Psi_2)$ is a pair of good penalization functions for $E = \cP(\{1,\dots,1\}) \times (\bR^+)^{\Gamma}$.
	
	In addition, we have
	\begin{align*}
		(\nabla \Psi_1(\cdot,\hat{\mu}))(\mu) & = - (\nabla \Psi_1(\mu,\cdot))(\hat{\mu}), \\
		(\nabla \Psi_2(\cdot,\hat{w}))(w) & = - (\nabla \Psi_2(w,\cdot))(\hat{w}).
	\end{align*}
\end{lemma}

The use of $\Psi_1$ is highly specific for the space $\cP(\{1,2,\dots,q\})$.  The special form is motivated by the linear constraint $\sum \mu(a) = 1$. The use of the standard quadratic distance leads to `loss of control' over the variables when applying Lemma \ref{lemma:control_on_H}. This issue is related to the discussion in Remark \ref{remark:choice_of_penalization}.
The adaptation of the quadratic distance takes into account the form of our Hamiltonian and the linear constraint in a symmetric way and is geared towards re-establishing the control via Lemma \ref{lemma:control_on_H}. 

\begin{proof}[Proof of Lemma \ref{lemma:explicit_good_penalization_functions}]
	Note that as $\sum_i \mu(i) = \sum_i \hat{\mu}(i) = 1$, we find that $\Psi_1(\mu,\hat{\mu}) = 0$ implies that $\mu = \hat{\mu}$. As $\Psi_2$ is a quadratic distance on $(\bR^+)^\Gamma$, we indeed have that $(\mu,w) = (\hat{\mu},\hat{w})$ if and only if $\Psi_1(\mu,\hat{\mu}) + \Psi_2(w,\hat{w}) = 0$.
	
	The second claim follows by direct verification.
\end{proof}

We proceed with the verification of of the comparison principle by establishing the key estimate \eqref{condH:negative:liminf} of Proposition \ref{proposition:comparison_conditions_on_H}.

\begin{proof}[Proof of Theorem \ref{theorem:comparison_principle}]
	The proof is an adaptation of the proof of Theorem 4 in \cite{Kr16b}. Fix $h \in C_b(E)$ and $\lambda > 0$. Let $u$ be a subsolution and $v$ be a supersolution to $f - \lambda H f = h$.
	
	We verify \eqref{condH:negative:liminf} of Proposition \ref{proposition:comparison_conditions_on_H} using containment function $\Upsilon$ and penalization functions $\Psi_1,\Psi_2$ from Lemmas \ref{lemma:explicit_good_containment_function} and \ref{lemma:explicit_good_penalization_functions}. For $\varepsilon > 0$, $\alpha_1,\alpha_2 > 0$ let  $x_{\alpha,\varepsilon} := (\mu_{\alpha,\varepsilon},w_{\alpha,\varepsilon})$ and $y_{\alpha,\varepsilon} := (\hat{\mu}_{\alpha,\varepsilon},\hat{w}_{\alpha,\varepsilon})$ be as in \eqref{eqn:comparison_principle_proof_choice_of_sequences}.
	
	\smallskip
	
	To establish the theorem, we will show that already after taking one liminf, the bound is satisfied. Indeed, we will show for fixed $\varepsilon > 0$ and $\alpha_2 > 0$ that
	\begin{multline}
		\liminf_{\alpha_1 \rightarrow \infty} \cH\left(x_{\alpha,\varepsilon},\sum_{i=1}^2 \alpha_i \nabla \Psi_i(\cdot,y_{\alpha,\varepsilon})(x_{\alpha,\varepsilon})\right) \\
		- \cH\left(y_{\alpha,\varepsilon},- \sum_{i=1}^2 \alpha_i \nabla \Psi_i(x_{\alpha,\varepsilon},\cdot)(y_{\alpha,\varepsilon})\right) \leq 0. \label{eqn:to_prove_fundamental_bound}
	\end{multline}
	By Lemma \ref{lemma:consecutive_limit_points} for fixed $\alpha_2, \varepsilon$ and sending $\alpha_1 \rightarrow \infty$, we find limit points $(x_{\alpha_2,\varepsilon},y_{\alpha_2,\varepsilon}) = ((\mu_{\alpha_2,\varepsilon}, w_{\alpha_2,\varepsilon}),(\mu_{\alpha_2,\varepsilon},\hat{w}_{\alpha_2,\varepsilon}))$ of the sequence $((\mu_{\alpha,\varepsilon},w_{\alpha,\varepsilon}),(\hat{\mu}_{\alpha,\varepsilon},\hat{w}_{\alpha,\varepsilon}))$. Without loss of generality, going to a subsequence if necessary, we assume that these sequences converge to their respective limit point. By the definition of $\cH$, we have
	\begin{align}
		& \cH\left(x_{\alpha,\varepsilon},\sum_{i=1}^2 \alpha_i \nabla \Psi_i(\cdot,y_{\alpha,\varepsilon})(x_{\alpha,\varepsilon})\right) - \cH\left(y_{\alpha,\varepsilon},- \sum_{i=1}^2 \alpha_i \nabla \Psi_i(x_{\alpha,\varepsilon},\cdot)(y_{\alpha,\varepsilon})\right) \notag \\
		& = \sum_{(a,b) \in \Gamma} \left[v(a,b,\mu_{\alpha,\varepsilon}) - v(a,b,\hat{\mu}_{\alpha,\varepsilon}) \right] \times \label{eqn:proof_comparison_Potts_sum} \\
		& \hspace{4em} \left[e^{\alpha_1\left(\left(\mu_{\alpha,\varepsilon}(b) - \hat{\mu}_{\alpha,\varepsilon}(b)\right)^- - \left(\mu_{\alpha,\varepsilon}(a) - \hat{\mu}_{\alpha,\varepsilon}(a)\right)^-\right) + \alpha_2\left(w_{\alpha,\varepsilon}(a,b) - \hat{w}_{\alpha,\varepsilon}(a,b)\right)} - 1\right]. \notag 
	\end{align}
	To ease the notation, and focus on the parts that matter, we will write $c_{\alpha,\varepsilon}(a,b) := \alpha_2\left(w_{\alpha,\varepsilon}(a,b) - \hat{w}_{\alpha,\varepsilon}(a,b)\right)$ as this term will not play a role in our bounds below. In fact, for fixed $\varepsilon$ and $\alpha_2$, we have for all $(a,b) \in \Gamma$ that
	\begin{equation} \label{eqn:bound_on_errors_c}
		\sup_{\alpha_1} \left| c_{\alpha,\varepsilon}(a,b) \right| < \infty
	\end{equation}
	because by the construction of Lemma \ref{lemma:consecutive_limit_points} we have
	\begin{equation*}
		\sup_{\alpha_1} \alpha_2 \Psi_2(w_{\alpha,\varepsilon},\hat{w}_{\alpha,\varepsilon}) < \infty.
	\end{equation*} 
	We will show that each term in \eqref{eqn:proof_comparison_Potts_sum} separately is bounded from above by $0$ asymptotically. Pick some ordering of the ordered pairs $(a,b) \in \Gamma$, and assume that we have some sequence $\alpha_1$ such that the $\liminf_{\alpha_1 \rightarrow \infty}$ of the first $l$ terms in equation \eqref{eqn:proof_comparison_Potts_sum} are bounded above by $0$. We construct a subsequence so that also term $l+1$ is asymptotically bounded above by $0$. The result then follows by induction.
	Thus, suppose that $(i,j)$ is the pair corresponding to the $l+1$-th term of the sum in \eqref{eqn:proof_comparison_Potts_sum}.

	We go through the two options of $v$ being a proper kernel. Clearly, if $v(i,j,\pi) = 0$ for all $\pi$ then we are done. Therefore, we assume that $v(i,j,\pi) \neq 0$ for all $\pi$ such that $\pi(i) > 0$ and that conditions (a) and (b) of having a proper kernel are satisfied.
	
	\textit{Case 1:} If $\mu_{\alpha_2,\varepsilon}(i) > 0$, we know by \eqref{eqn:control_on_H_sup}, using that $v(i,j,\cdot)$ is bounded away from $0$ on a neighbourhood of $\mu_{\alpha_2,\varepsilon}$ (condition (a) of having a proper kernel), that 
	\begin{equation*}
		\sup_{\alpha_1} e^{\alpha_1\left(\left(\mu_{\alpha,\varepsilon}(j) - \hat{\mu}_{\alpha,\varepsilon}(j)\right)^- - \left(\mu_{\alpha,\varepsilon}(i) - \hat{\mu}_{\alpha,\varepsilon}(i)\right)^-\right) + c_{\alpha,\varepsilon}(i,j) } - 1 < \infty.
	\end{equation*}
	As we also have that the exponential is bounded from below by $0$, we can pick a subsequence $\alpha(n) = (\alpha_{1}(n),\alpha_2)$ and some constant $c$ such that 
	\begin{equation*}
		e^{\alpha_1(n) \left(\left(\mu_{\alpha(n),\varepsilon}(j) - \hat{\mu}_{\alpha(n),\varepsilon}(j)\right)^- - \left(\mu_{\alpha(n),\varepsilon}(i) - \hat{\mu}_{\alpha(n),\varepsilon}(i)\right)^-\right) + c_{\alpha(n),\varepsilon}(i,j) } - 1  \rightarrow c.
	\end{equation*}
	Using that $\pi \rightarrow v(i,j,\pi)$ is uniformly continuous on compact sets, we see 
	\begin{align*}
		& \liminf_{\alpha_1 \rightarrow \infty} \left[v(i,j,\mu_{\alpha,\varepsilon}) - v(i,j,\hat{\mu}_{\alpha,\varepsilon}) \right] \times \\
		& \hspace{3.5em} \left[e^{\alpha_1\left(\left(\mu_{\alpha,\varepsilon}(j) - \nu_{\alpha,\varepsilon}(j)\right)^- - \left(\mu_{\alpha,\varepsilon}(i) - \nu_{\alpha,\varepsilon}(i)\right)^-\right) +c_{\alpha,\varepsilon}(i,j)} - 1\right] \\
		& \quad \leq \lim_{n \rightarrow \infty} \left[v(i,j,\mu_{\alpha(n),\varepsilon}) - v(i,j,\hat{\mu}_{\alpha(n),\varepsilon}) \right] \times \\
		& \hspace{3.5em} \lim_{n \rightarrow \infty} \left[e^{\alpha_1(n)\left(\left(\mu_{\alpha(n),\varepsilon}(j) - \hat{\mu}_{\alpha(n),\varepsilon}(j)\right)^- - \left(\mu_{\alpha(n),\varepsilon}(i) - \hat{\mu}_{\alpha(n),\varepsilon}(i)\right)^-\right) + c_{\alpha,\varepsilon}(i,j)} - 1\right] \\
		& \quad = c \lim_{n \rightarrow \infty} \left[v(i,j,\mu_{\alpha(n),\varepsilon}) - v(i,j,\hat{\mu}_{\alpha(n),\varepsilon}) \right] = 0.
	\end{align*}
	
	\textit{Case 2:} Suppose that $\mu_{\alpha,\varepsilon}(i),\hat{\mu}_{\alpha,\varepsilon}(i) \rightarrow 0$ as $\alpha_1 \rightarrow \infty$. Again by \eqref{eqn:control_on_H_sup}, we get
	\begin{equation} \label{eqn:proof_comparison_jump_sup_bound_on_exp}
		M:= \sup_{\alpha_1} v(i,j,\hat{\mu}_{\alpha,\varepsilon}) \left[e^{\alpha_1\left(\left(\mu_{\alpha,\varepsilon}(j) - \hat{\mu}_{\alpha,\varepsilon}(j)\right)^- - \left(\mu_{\alpha,\varepsilon}(i) - \hat{\mu}_{\alpha,\varepsilon}(i)\right)^-\right) + c_{\alpha,\varepsilon}(i,j)} - 1\right] < \infty.
	\end{equation}
	First of all, if $\sup_{\alpha_1} \alpha_1\left(\left(\mu_{\alpha,\varepsilon}(j) - \hat{\mu}_{\alpha,\varepsilon}(j)\right)^- - \left(\mu_{\alpha,\varepsilon}(i) - \hat{\mu}_{\alpha,\varepsilon}(i)\right)^-\right) + c_{\alpha,\varepsilon}(i,j) < \infty$, then the argument given in step 1 above also takes care of this situation. So suppose that this supremum is infinite. Clearly, the contribution $\alpha_1\left(\mu_{\alpha,\varepsilon}(j) - \hat{\mu}_{\alpha,\varepsilon}(j)\right)^-$ is negative, and the one of $c_{\alpha,\varepsilon}$ is uniformly bounded by \eqref{eqn:bound_on_errors_c}, which implies that $\sup_{\alpha_1} \alpha_1\left(\hat{\mu}_{\alpha,\varepsilon}(i) -\mu_{\alpha,\varepsilon}(i)\right)^+ = \infty$. This means that we can assume without loss of generality that
	\begin{equation} \label{eqn:comparison_jump_assumption_measures}
		\alpha_1\left(\hat{\mu}_{\alpha,\varepsilon}(i) - \mu_{\alpha,\varepsilon}(i)\right) \rightarrow \infty, \qquad \hat{\mu}_{\alpha,\varepsilon}(i) > \mu_{\alpha,\varepsilon}(i). 
	\end{equation}
	The bound on the right, combined with property (a) of $v$ being a proper kernel, implies that $v(i,j,\hat{\mu}_{\alpha,\varepsilon}) > 0$. We rewrite the term $(a,b) = (i,j)$ in equation \eqref{eqn:proof_comparison_Potts_sum} as
	\begin{multline*}
		\left[\frac{v(i,j,\mu_{\alpha,\varepsilon})}{v(i,j,\hat{\mu}_{\alpha,\varepsilon})} - 1 \right] \times \\
		v(i,j,\hat{\mu}_{\alpha,\varepsilon}) \left[e^{\alpha_1\left(\left(\mu_{\alpha,\varepsilon}(j) - \hat{\mu}_{\alpha,\varepsilon}(j)\right)^- - \left(\mu_{\alpha,\varepsilon}(i) - \hat{\mu}_{\alpha,\varepsilon}(i)\right)^-\right) + c_{\alpha,\varepsilon}(i,j)} - 1\right].
	\end{multline*}
	The term on the second line is bounded above by $M$ introduced in \eqref{eqn:proof_comparison_jump_sup_bound_on_exp} and bounded below by $-\vn{v}$. Thus, we can take a subsequence of $\alpha_1$, also denoted by $\alpha_1$, such that the right-hand side converges. By \eqref{eqn:comparison_jump_assumption_measures}, the right-hand side is non-negative. Therefore, it suffices to show that
	\begin{equation*}
		\liminf_{\alpha_1 \rightarrow \infty} \frac{v(i,j,\mu_{\alpha,\varepsilon})}{v(i,j,\hat{\mu}_{\alpha,\varepsilon})} \leq 1.
	\end{equation*}
	By property (b) of $v$ being a proper kernel, we find $v(i,j,\mu) = v_\dagger(i,j,\mu(i))v_\ddagger(i,j,\mu)$ which implies that
	\begin{multline*}
		\liminf_{\alpha_1 \rightarrow \infty} \frac{v(i,j,\mu_{\alpha,\varepsilon})}{v(i,j,\hat{\mu}_{\alpha,\varepsilon})} = \liminf_{\alpha_1 \rightarrow \infty} \frac{v_\dagger(i,j,\mu_{\alpha,\varepsilon}(i))}{v_\dagger(i,j,\hat{\mu}_{\alpha,\varepsilon}(i))}\frac{v_\ddagger(i,j,\mu_{\alpha,\varepsilon})}{v_\ddagger(i,j,\hat{\mu}_{\alpha,\varepsilon})} \\
		\leq \left(\limsup_{\alpha_1 \rightarrow \infty} \frac{v_\dagger(i,j,\mu_{\alpha,\varepsilon}(i))}{v_\dagger(i,j,\hat{\mu}_{\alpha,\varepsilon}(i))}\right) \left( \lim_{\alpha_1  \rightarrow \infty}\frac{v_\ddagger(i,j,\mu_{\alpha,\varepsilon})}{v_\ddagger(i,j,\hat{\mu}_{\alpha,\varepsilon})} \right) \leq \frac{v_\ddagger(i,j,\mu_{\alpha_2,\varepsilon})}{v_\ddagger(i,j,\mu_{\alpha_2,\varepsilon})} = 1,
	\end{multline*} 
	where we use that $r \mapsto v_{\dagger}(i,j,r)$ is increasing and the bound in \eqref{eqn:comparison_jump_assumption_measures} for the first term and that $\pi \mapsto v_\ddagger(i,j,\mu)$ is continuous and bounded away from zero in a neighborhood of $\mu_{\alpha_2,\varepsilon}$ for the second term.
	
	Thus, cases 1 and 2 inductively establish an upper bound for \eqref{eqn:to_prove_fundamental_bound}, concluding the proof.
\end{proof}

\begin{remark} \label{remark:choice_of_penalization}
	Note that the motivation for the definition of the non-standard $\Psi_1$ in \cite{Kr16b}, as well as the introduction of the use of two penalization functions in this paper comes from the bound obtained in \eqref{eqn:proof_comparison_jump_sup_bound_on_exp}. Indeed, in \cite{Kr16b} the use of $\Psi_1$ allowed us to obtain \eqref{eqn:comparison_jump_assumption_measures}, which is needed to complete the argument.
	
	In our setting, where we work with fluxes, using a single penalization function $\Psi = \Psi_1 + \Psi_2$ multiplied by $\alpha$, would not allow us to obtain \eqref{eqn:comparison_jump_assumption_measures} due to the interference coming from $\Psi_2$. Instead considering these two functions separately with separate multiplicative constants, allows us to establish the important inequality in \eqref{eqn:comparison_jump_assumption_measures}.
	
	\smallskip
	
	Note that an argument like the one carried out in this proof does not seem directly applicable in the context of mass-action kinetics. In particular, if one allows transitions leading to a jumps in the rescaled dynamics of the type $n^{-1} \left(\delta_{c} + \delta_{d} - \delta_{a} - \delta_{b}\right)$, one gets instead of  \eqref{eqn:comparison_jump_assumption_measures} a statement of the type
	\begin{equation*}
		\alpha_1(\hat{\mu}_{\alpha,\varepsilon}(a) - \mu_{\alpha,\varepsilon}(a)) + \alpha_1(\hat{\mu}_{\alpha,\varepsilon}(b) - \mu_{\alpha,\varepsilon}(b)) \rightarrow \infty.
	\end{equation*}
	From such a statement, one cannot derive that $\hat{\mu}_{\alpha,\varepsilon}(a) > \mu_{\alpha,\varepsilon}(a)$ and $\hat{\mu}_{\alpha,\varepsilon}(a) > \mu_{\alpha,\varepsilon}(b)$ for large $\alpha$. This makes it impossible to proceed with the present argument. It seems that a new type of penalization procedure is needed.
\end{remark}

\section{Convergence of operators, exponential tightness and  the variational representation of the rate function} \label{section:proof_ldp_explicit_other_steps}

\subsection{Convergence of operators} \label{section:convergence_of_operators}

We proceed with the verification of \ref{item:LDP_abstract_convergenceHn} of Theorem \ref{theorem:abstract_LDP}, namely that there exists an operator $H$ such that $H \subseteq ex-\LIM H_n$.

\begin{proposition} \label{proposition:convergence_of_operators}
	Consider the setting of Theorem \ref{theorem:ldp_mean_field_jump_process_periodic}.	Let  $H$ be the operator with domain $\cD(H) = C_b^2(E)$ satisfying $Hf(x) = \cH(x,\nabla f(x))$ with $\cH$ as in \eqref{eqn:Hamiltonian}:
	\begin{equation} \label{eqn:Hamiltonian_proof}
		\cH((\mu,w),p) = \sum_{(a,b) \in \Gamma} \mu(a) \overline{r}(a,b,\mu)\left[\exp\left\{p_b - p _a + p_{(a,b)} \right\} - 1 \right].
	\end{equation}
	Then we have $H \subseteq ex-\LIM H_n$.
\end{proposition}

To recall the definition of $ex-\LIM$: we will prove that for each $f \in C_b^2(E)$ there are $f_n \in C_b(\bR^+ \times E_n)$ such that 
\begin{align}
	& \LIM f_n = f, \label{eqn:convergence_functions_time_periodic} \\
	& \LIM H_n f_n = Hf. \label{eqn:convergence_Hfunctions_time_periodic}
\end{align}

The proof of this proposition will be carried out in three steps. 
\begin{itemize}
	\item In Lemma \ref{lemma:convergence_of_operators_modulo_time_period}, we will establish the convergence of operators in the context where the time dependence is essentially removed by working along a time sequence $s_n = s\gamma_n^{-1}$. In this context we will show that for any function $f$ and small perturbation $h_n$ we have $H_n[\gamma_n^{-1} s] (f + h_n) \rightarrow H_0[s]f$.
	\item In Lemma \ref{lemma:integral_perturbation}, we will introduce a sequence of functions that are periodic over the respective intervals on which the jump-rates $r_n$ oscillate. These functions will satisfy the conditions for the small perturbations $h_n$ of the previous step. 
	\item We prove Proposition \ref{proposition:convergence_of_operators} by showing that the perturbations $h_n$ have the effect that $H_n(f + h_n)$ are constant in time. 
\end{itemize}

We first introduce some auxiliary notation.

Let $H_0[t]$ be the operators with domain $C^2_b(E)$ satisfying for $f \in C_b^2(E)$: $H_0[t]f(x) = \cH_0[t](x,\nabla f(x))$ and where
\begin{equation} 
	\cH_0[t]((\mu,w),p) = \sum_{(a,b) \in \Gamma} \mu(a) r(t,a,b,\mu)\left[\exp\left\{p_b - p _a + p_{(a,b)} \right\} - 1 \right]. \label{eqn:Hamiltonian_proof_periodic_0version}
\end{equation}
Following \eqref{eqn:defH_abstract}, we find that for $f$ with $e^{nf} \in \cD(\vec{A}_n)$ that
\begin{align}
	H_n f(t,\mu,w) & := \frac{1}{n} e^{-nf(t,\mu,w)} \cdot \vec{A}_n e^{nf} (t,\mu,w) \notag \\
	& = \partial_t f(t,\mu,w) + H_n[t] f(t,\mu,w). \label{eqn:separation_H_n}
\end{align}
We similarly define the Hamiltonian $H_n[t]$ at time $t$ in terms of the generator $A_n[t]$ at time $t$. For $f$ such that $e^{nf} \in \cD(A_n[t])$, set
\begin{equation*}
	H_{n}[t] f(t,\mu,w)  := \frac{1}{n} e^{-nf(t,\mu,w)} \cdot A_n[t] e^{nf} (t,\mu,w), \notag \\
	= \frac{1}{n} e^{-nf(t,\mu,w)} \cdot (A_n[t] e^{nf} (t,\cdot,\cdot))(\mu,w).
\end{equation*}
Finally, denote for $(\hat{a},\hat{b}) \in \Gamma$ the measure $\mu^{\hat{a},\hat{b}} = \mu + \frac{1}{n}\left(\delta_{\hat{b}} - \delta_{\hat{a}}\right)$ and flux $w^{\hat{a},\hat{b}} = w + \frac{1}{n} \delta_{(\hat{a},\hat{b})}$.

\begin{lemma} \label{lemma:convergence_of_operators_modulo_time_period}
	Consider the setting of Proposition \ref{proposition:convergence_of_operators}. Let $f \in C_b^2(E)$ and let $h_n : \bR^+ \times E_n \rightarrow \bR$ be functions such that

	\begin{equation} \label{eqn:assumption_hn}
		\lim_{n \rightarrow \infty} \sup_{s \in \bR^+, (\mu,w) \in E_n} \sup_{(\hat{a},\hat{b}): \mu(\hat{a}) > 0}  n \left| h_n(s,\mu^{\hat{a},\hat{b}},w^{\hat{a},\hat{b}}) - h_n(s,\mu,w)  \right| = 0.
	\end{equation}
	We then have that $\sup_n \sup_s \vn{H_n[s] (f + h_n) } < \infty$ and
	\begin{equation} \label{eqn:conv_time_dependent_H}
		\lim_{n \rightarrow \infty}  H_n[\gamma_n^{-1} s](f  + h_n)(s,\mu_n,w_n) = H_0[s] f(\mu,w)
	\end{equation} 
	for any sequence  $(\mu_n,w_n) \in E_n$ such that $(\mu_n,w_n) \rightarrow (\mu,w)$ uniformly in $s \geq 0$. 
\end{lemma}

\begin{proof}
	Fix $f \in C_b^2(E)$ and $h_n$ satisfying \eqref{eqn:assumption_hn}. We will prove that $\sup_n \sup_s \vn{H_n[s](f + h_n)} < \infty$ and that for any sequence $(\mu_n,w_n) \in E_n$ such that $(\mu_n,w_n) \rightarrow (\mu,w)$, we have
	\begin{equation} \label{eqn:convergence_H}
		\lim_{n \rightarrow \infty} H_n[\gamma_n^{-1} s](f + h_n)(t_n,\mu_n,w_n) = H_0[s]f(\mu,w)
	\end{equation}
	uniformly in $s$. We consider the left-hand side:
	\begin{align*}
		& H_n[\gamma_n^{-1} s] (f  + h_n)(\mu_n,w_n) \\
		& = \sum_{(a,b) \in\Gamma} \mu_n(a) r_n(\gamma_n^{-1} s, a,b,\mu_n) \\
		& \hspace{3cm} \times \left[e^{n \left(f(\mu_n^{a,b} , w_n^{a,b} - f(\mu_n,w_n)  + h_n(\mu_n^{a,b} ,w_n^{a,b} ) - h(\mu_n,w_n)\right)} - 1 \right] \\
		& = \sum_{(a,b) \in\Gamma} \mu_n(a) r_n(\gamma_n^{-1} s, a,b,\mu_n) \left[e^{n \left(f(\mu_n^{a,b} , w_n^{a,b} - f(\mu_n,w_n)\right) + o(1)} - 1 \right],
	\end{align*}
	where $o(1)$ is a term that vanishes in $n$ uniformly in all parameters by \eqref{eqn:assumption_hn}. As $f \in C_b(E)$, a first order Taylor expansion of $f$ around $\left(\mu_n,w_n\right)$, using Assumption \ref{assumption:jump_rates} \ref{item:results_convergence} and that $\left(\mu_n,w_n\right) \rightarrow (\mu,w)$, we find indeed that \eqref{eqn:convergence_H} holds uniformly in $s$. Note that the first order expansion of $f$ in the exponent can also be used to establish that $\sup_n \sup_s \vn{H_n[s] (f + h_n)} < \infty$.	
\end{proof}

Next, we take care of the fluctuating rates. Note that to prove $H \subseteq ex-\LIM H_n$, we need to find for each $f \in \cD(H)$ a sequence $f_n \in \cD(H_n)$ such that $\LIM f_n = f$ and $\LIM H_n f_n = Hf$. By definition, we have each $f \in \cD(H)$ that $\LIM f  = f$. This, combined with the argument in the proof of Lemma \ref{lemma:convergence_of_operators_modulo_time_period} would be sufficient to establish the convergence of operators if there were no time periodicity. Our context, however, is more difficult. We will modify the functions $f \in \cD(H_n)$ with a perturbative term, itself oscillating, that will cancel out the oscillatory behaviour of the jump rates.

\begin{lemma} \label{lemma:integral_perturbation}
	Consider the setting of Proposition \ref{proposition:convergence_of_operators}. Fix $f \in C_b^2(E)$. For each $n$, define $F_{f,n} : \bR^+ \times E_n$ as
	\begin{equation*}
		F_{f,n}(t,\mu,w) := \int_0^{t} H_n[s] f(\mu,w) \dd s - \frac{t}{\gamma_n^{-1}T_0} \int_0^{\gamma_n^{-1} T_0} H_n[s] f(\mu,w) \dd s.
	\end{equation*}
	Then
	\begin{enumerate}[(a)]
		\item $F_{f,n}(t + \gamma_n^{-1}T_0, \mu,w) = F_{f,n}(t , \mu,w)$ for all $n$ and $(t,\mu,w)$,
		\item We have $\lim_n \vn{F_{f,n}} = 0$.
		\item The functions $F_{f,n}$ satisfy the following Lipschitz estimate
		\begin{equation} \label{eqn:assumption_Ffn}
			\lim_{n \rightarrow \infty} \sup_{s \in \bR^+, (\mu,w) \in E_n} \sup_{(\hat{a},\hat{b}): \mu(\hat{a}) > 0}  n \left| F_{f,n}(\mu^{\hat{a},\hat{b}},w^{\hat{a},\hat{b}}) - F_{f,n}(\mu,w)  \right| = 0.
		\end{equation}
	\end{enumerate}
\end{lemma}

\begin{proof}
	Property (a) is immediate as $F_n(\gamma_n^{-1} T_0,\mu,w) = 0$ and  the $\gamma_n^{-1}t_0$ periodicity of the jump-rates of Assumption \ref{assumption:jump_rates} \ref{item:results_time_periodic}. Due to this periodicity, it suffices for (b) to establish
	\begin{equation*}
		\sup_{t \in [0,\gamma_n^{-1}T_0], (\mu,w) \in E_n} |F_n(t,\mu,w) | \rightarrow 0.
	\end{equation*}
	By a change of variables $u = \gamma_n s$, we find
	\begin{equation*}
		F_n(t,\mu,w) = \gamma_n^{-1} \int_0^{\gamma_n t} H_n[\gamma_n^{-1} u] f (\mu,w) \dd u - \frac{t}{T_0} \int_0^{T_0} H_n[\gamma_n^{-1} u] f (\mu,w) \dd u.
	\end{equation*}
	Using \eqref{eqn:conv_time_dependent_H} with $h_n = 0$, we can replace, up to uniform errors $c_n$ that satisfy $c_n \rightarrow 0$, the integrands by $H[u]f(\mu,w)$. This implies
	\begin{equation*}
		F_n(t,\mu,w) = c_n + \gamma_n^{-1} \int_0^{\gamma_n t} H[u] f (\mu,w) \dd u - \frac{t}{T_0} \int_0^{T_0} H[u](\mu,w) \dd u
	\end{equation*}
	which gives
	\begin{equation*}
		\sup_{t \in [0,\gamma_n^{-1}T_0], (\mu,w) \in E_n} |F_n(t,\mu,w) | \leq c_n + \gamma_n^{-1} C
	\end{equation*}
	concluding the proof of (b). For the proof of (c), we will first establish that here is a constant $C_{f}$ such that for any $n$, any $(\mu,w) \in E_n$, $(\hat{a},\hat{b}) \in \Gamma$ such that $\mu(\hat{a}) > 0$  and $s \geq 0$ we have
	\begin{equation}\label{eqn:Lipschitz}
		n \left| H_n[s]f\left(\mu + \frac{1}{n}(\delta_{\hat{b}} - \delta_{\hat{a}}), w + \frac{1}{n} \delta_{(\hat{a},\hat{b})}\right) -  H_n[s]f(\mu,w) \right| \leq C_{f}.
	\end{equation}
	Applying the definition of $H_n$ yields
	\begin{align*}
		& H_n[s]f\left(\mu^{\hat{a},\hat{b}},w^{\hat{a},\hat{b}}\right) -  H_n[s]f(\mu,w) \\
		& = \sum_{(a,b)\in \Gamma} \mu^{\hat{a},\hat{b}}(a) r_n(s,a,b,\mu^{\hat{a},\hat{b}}) \left[ e^{\frac{1}{n}\left(f(\mu^{\hat{a},\hat{b}} + \frac{1}{n}( \delta_b - \delta_a) , w^{\hat{a},\hat{b}} + \frac{1}{n}\delta_{(a,b)}) \right)} - 1 \right] \\
		& \hspace{3cm} - \sum_{(a,b)\in \Gamma} \mu(a) r_n(s,a,b,\mu) \left[ e^{\frac{1}{n}\left(f(\mu + \frac{1}{n}( \delta_b - \delta_a), w + \frac{1}{n}\delta_{(a,b)}) \right)} - 1 \right].
	\end{align*}
	Standard arguments for obtaining Lipschitz estimates using Assumption \ref{assumption:jump_rates} \ref{item:results_Lipschitz} and $f \in C_b^2(E)$ yield \eqref{eqn:Lipschitz}. Using \eqref{eqn:Lipschitz} in line 5, we find
	\begin{align*}
		& \sup_{t \geq 0} n \left| F_{f,n}(t,\mu^{\hat{a},\hat{b}},w^{\hat{a},\hat{b}}) - F_{f,n}(t,\mu,w) \right| \\
		& \sup_{t \in[0,\gamma_n^{-1}T_0]} n \left| F_{f,n}(t,\mu^{\hat{a},\hat{b}},w^{\hat{a},\hat{b}}) - F_{f,n}(t,\mu,w) \right| \\
		& \quad  = \sup_{t \in[0,\gamma_n^{-1}T_0]} n \left| \int_0^t H_n[s]f(\mu^{\hat{a},\hat{b}},w^{\hat{a},\hat{b}}) -  H_n[s]f(\mu,w) \, \dd s \right| \\
		& \quad \leq \sup_{t \in[0,\gamma_n^{-1}T_0]}n  \int_0^t \left| H_n[s]f(\mu^{\hat{a},\hat{b}},w^{\hat{a},\hat{b}}) -  H_n[s]f(\mu,w) \right| \, \dd s \\
		& \quad \leq \sup_{t \in[0,\gamma_n^{-1}T_0]} \int_0^t C_{f} \dd s \\
		& \quad \leq \gamma_n^{-1} \hat{C}_{f}
	\end{align*}
	establishing (c).
\end{proof}

\begin{proof}[Proof of Proposition \ref{proposition:convergence_of_operators}]
	Recall that for each $f \in \cD(H) = C_b^2(E)$, we need to establish the existence of $f_n \in C_b(\bR^+ \times E_n)$ such that 
	\begin{align}
		& \LIM f_n = f, \label{eqn:convergence_functions_time_periodic_proof} \\
		& \LIM H_n f_n = Hf. \label{eqn:convergence_Hfunctions_time_periodic_proof}
	\end{align}
	Fix $f \in C_b^2(E)$. Using the functions $F_{f,n}$ from Lemma \ref{lemma:integral_perturbation}, we define a suitable collection of functions $f_n$ that approximate $f$ and which take care of the periodic behaviour in the Hamiltonian:
	\begin{align*}
		f_n(t,\mu,w) & := f(\mu,w) -  \left( \int_0^{t} H_n[s] f(\mu,w) \dd s - \frac{t}{\gamma_n^{-1}T_0} \int_0^{\gamma_n^{-1} T_0} H_n[s] f(\mu,w) \dd s \right) \\
		& \, = f(\mu,w) - F_{f,n}(t,\mu,w).
	\end{align*}
	By Lemma \ref{lemma:integral_perturbation}, we have \eqref{eqn:convergence_functions_time_periodic_proof}. We proceed with establishing \eqref{eqn:convergence_Hfunctions_time_periodic_proof}. We use the form in \eqref{eqn:separation_H_n} to establish this result.
	
	Let $(t_n,\mu_n,w_n) \in \bR^+ \times E_n$ be such that $\mu_n,w_n \rightarrow (\mu,w)$ and $\sup_n t_n < \infty$. Note that the application of $H_n[t_n]$ to $f  - F_{f,n}$ and the application of the time derivative to the first integral term of $-F_{f,n}$ yield 
	\begin{equation*}
		H_n[t_n](f - F_{f,n})(t_n,\mu_n,w_n) - H_n[t_n]f(\mu_n,w_n)
	\end{equation*}	
	which converges to $0$ by Lemma \ref{lemma:convergence_of_operators_modulo_time_period} as \eqref{eqn:assumption_Ffn} implies \eqref{eqn:assumption_hn}. We thus obtain the final expression
	\begin{align}
		H_n(f_n  - F_{f,n})(t_n,\mu_n,w_n) & = \frac{1}{\gamma_n T_0} \int_0^{\gamma_n^{-1} T_0} H_n[s] r(s,\mu_n,w_n) \, \dd s + o(1), \notag \\
		& = \frac{1}{T_0} \int_0^{T_0} H_n[\gamma_n^{-1} u] f(\mu_n,w_n) \, \dd u + o(1), \label{eqn:Hn_to_f_with_perturbation}
	\end{align}
	which does not depend on $t_n$. Using \eqref{eqn:conv_time_dependent_H} of Lemma \ref{lemma:convergence_of_operators_modulo_time_period} and the dominated convergence theorem, this yields 
	\begin{align*}
		\lim_n H_n(f_n - F_{f,n})(t_n,\mu_n,w_n) = Hf(\mu,w)
	\end{align*}
	establishing \eqref{eqn:convergence_Hfunctions_time_periodic_proof}.
	
\end{proof}

\subsection{Verifying exponential tightness}

The next condition in Theorem \ref{theorem:abstract_LDP}  is exponential tightness.

\begin{proposition} \label{proposition:exponential_tightness}
	The processes $t \mapsto (t_0 + t, \mu_n(t),w_n(t))$ started in a compact set are exponentially tight on $D_E(\bR^+)$.
\end{proposition}

It is well known in the context of weak convergence that tightness follows from compact containment and the convergence of generators. The same holds in the context of large deviations. The following proposition is the exponential compact containment condition. This property, combined with the convergence of operators established in Proposition \ref{proposition:convergence_of_operators} yields the result by \cite[Corollary 4.19]{FK06} or \cite[Proposition 7.12]{Kr19c}.

\begin{proposition} \label{proposition:compact_containment}
	For each compact set $K \subseteq (\bR^+)^\Gamma$, $T_0 >0$, $T > 0$ and $a > 0$, there is a compact set $\hat{K} \subseteq (\bR^+)^\Gamma$ depending on $K,T,a$ such that
	\begin{equation*}
		\limsup_{n \rightarrow \infty} \sup_{(t_0,\mu_0,w_0): t_0 \leq T_0, w \in K} \frac{1}{n} \log \PR\left[w_n(t) \notin \hat{K} \, \middle| \, (t(0), \mu_n(0),w_n(0)) = (t_0,\mu_0,w_0) \right] \leq -a.
	\end{equation*}
\end{proposition}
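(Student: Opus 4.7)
The natural approach is to use the containment function $\Upsilon(\mu,w) = \sum_{(a,b)\in\Gamma}\log(1+w_{(a,b)})$ from Lemma~\ref{lemma:explicit_good_containment_function} together with an exponential martingale argument, since its sublevel sets are compact in $E$. In fact, because each coordinate of $W_n(t)$ is non-decreasing in $t$, the sublevel sets of $\Upsilon$ are stable under decreasing $w$, so controlling $\Upsilon(Z_n(T))$ automatically controls $\Upsilon(\frac{1}{n}W_n(t))$ for every $t\leq T$. It therefore suffices to bound the probability that $\Upsilon$ exceeds a suitable threshold at time $T$.

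\textbf{Key estimate.} The main calculation is the uniform bound $\sup_{n}\sup_{(\vec Y,W)} H_n(\Upsilon\circ\eta_n)(\vec Y,W)<\infty$, where $H_nf := n^{-1}e^{-nf}A_n e^{nf}$. Since $\Upsilon$ depends only on $w$, a jump of particle $i$ from $Y_i$ to $b$ contributes
\begin{equation*}
n\bigl[\Upsilon(\eta_n(\vec Y^{i\mapsto b},W+\delta_{(Y_i,b)}))-\Upsilon(\eta_n(\vec Y,W))\bigr]
= n\log\!\Bigl(1+\tfrac{n^{-1}}{1+n^{-1}W_{(Y_i,b)}}\Bigr)\leq 1,
\end{equation*}
so $e^{n\Delta\Upsilon}-1\leq e-1$. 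Summing over $i$ and $b$, rewriting $\sum_i r_n(Y_i,b,\vec Y)=n\sum_a \mu_n[\vec Y](a)r_n(a,b,\vec Y)$, and applying~\eqref{eqn:property_of_jump_kernels} together with continuity of $v$ on the compact set $\cP(\{1,\dots,q\})$, we obtain
\begin{equation*}
H_n(\Upsilon\circ\eta_n)(\vec Y,W) \leq (e-1)\sup_{\mu}\sum_{(a,b)\in\Gamma}v(a,b,\mu) + o(1) =:C<\infty.
\end{equation*}

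\textbf{Conclusion via exponential Markov.} With this bound in hand, the process
\begin{equation*}
M_n(t):=\exp\!\Bigl(n\Upsilon(Z_n(t))-n\Upsilon(Z_n(0))-\int_0^t n H_n(\Upsilon\circ\eta_n)(\vec X_n(s),W_n(s))\,ds\Bigr)
\end{equation*}
is a non-negative (local) martingale with $\bE[M_n(T)]\leq 1$, so $\bE[\exp(n\Upsilon(Z_n(T)))]\leq \exp(n\Upsilon(Z_n(0))+nCT)$. Markov's inequality gives
\begin{equation*}
\PR\!\left[\Upsilon(Z_n(T))\geq c\mid Z_n(0)=z_0\right] \leq \exp\!\bigl(-n(c-\Upsilon(z_0)-CT)\bigr).
\end{equation*}
For $z_0=\eta_n(\vec Y,W)$ with $W/n\in K$ (compact), $\Upsilon(z_0)$ is uniformly bounded by some $C_K$; setting $c:=a+C_K+CT$ and
\begin{equation*}
\hat K := \cP(\{1,\dots,q\})\times\bigl\{w\in(\bR^+)^\Gamma : \textstyle\sum_{(a,b)}\log(1+w_{(a,b)})\leq c\bigr\}
\end{equation*}
(which is compact) closes the proof, after projecting onto the flux coordinates.

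\textbf{Expected obstacle.} The only technical subtlety is that $\Upsilon$ is unbounded, so strictly speaking $\Upsilon\circ\eta_n$ does not lie in the domain of $A_n$ acting on bounded functions. The standard remedy is to localize via the stopping times $\tau_M:=\inf\{t:\max_{(a,b)}W_n(t)_{(a,b)}\geq nM\}$ (on which $M_n(\cdot\wedge\tau_M)$ is a bona fide martingale), apply the argument above to $M_n(T\wedge\tau_M)$, and then use monotone convergence as $M\to\infty$; alternatively one may replace $\Upsilon$ by a bounded truncation $\Upsilon\wedge N$ and send $N\to\infty$. Either localization is routine thanks to the $n$-uniform bound on $H_n(\Upsilon\circ\eta_n)$.
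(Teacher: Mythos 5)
Your proof is correct and follows essentially the same strategy as the paper: use the containment function $\Upsilon(\mu,w)=\sum_{(a,b)\in\Gamma}\log(1+w_{(a,b)})$, the uniform bound $\sup_n\sup_{\vec Y,W}H_n(\Upsilon\circ\eta_n)<\infty$ (which is the same estimate that makes $\Upsilon$ a good containment function), and an exponential martingale argument, handling the unboundedness of $\Upsilon$ by truncation/localization. The one small improvement is your observation that $W_n$ is coordinatewise non-decreasing, so the event $\{\exists\,t\le T:\tfrac1nW_n(t)\notin\hat K\}$ coincides with $\{\tfrac1nW_n(T)\notin\hat K\}$; this lets you replace the paper's optional-stopping estimate over $[0,T]$ by a single application of Markov's inequality at time $T$, which is a mild simplification but not a different method.
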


The proof is based on a standard martingale argument and is given for completeness, see e.g. Section 4.6 of \cite{FK06}.

\begin{proof}
	Recall that containment function $\Upsilon(\mu,w) = \Upsilon(w) = \sum_{(a,b) \in \Gamma} \log (1 + w_{(a,b)})$ introduced in Lemma \ref{lemma:explicit_good_containment_function} and that the argument in its proof also yields
	\begin{equation*}
		\sup_{\mu,w} \sup_{t}\cH[t]((\mu,w),\nabla \Upsilon(\mu,w)) =: c_{0,\Upsilon} < \infty.
	\end{equation*}
	Choose $\beta > 0$ such that $Tc_{0,\Upsilon} + 1 - \beta \leq -a$. As $\Upsilon$ has compact sublevel sets, we can choose a $c$ such that
	\begin{equation*}
		K \subseteq \left\{(\mu,w) \, \middle| \, \Upsilon(\mu,w) \leq c \right\}
	\end{equation*}
	Next, set $G := \left\{w \, \middle| \, \Upsilon(w) < c + \beta \right\}$ and let $\hat{K}$ be the closure of $G$ (which is a compact set). Let $f(x) := \iota \circ \Upsilon $ where $\iota$ is some smooth increasing function such that
	\begin{equation*}
		\iota(r) = \begin{cases}
			r & \text{if } r \leq c + \beta, \\
			c+ \beta + 1 & \text{if } r > c+ \beta + 2.
		\end{cases}
	\end{equation*}
	Set $g_n := H_n f$. By definition it follows that $\LIM f = f$. We now bound $H_n f$ from above using that $f$ is derived from $\Upsilon$.

	By \eqref{eqn:separation_H_n}, we find
	\begin{align*}
		\sup_{\mu,w,t} H_n f(t,\mu,w) &  = \sup_{\mu,w,t} H_n[t] f(t,\mu,w) \\
		& = \sup_{\mu,w,t} H_n[t] f(\gamma_n^{-1} t,\mu,w).
	\end{align*}
	Noting that $g(\mu,w) = \cH(\mu,w, \nabla \Upsilon(\mu,w)) \leq c_{0,\Upsilon}$ if $w \in \hat{K}$, we find by Lemma \ref{lemma:convergence_of_operators_modulo_time_period} that
	\begin{equation} \label{eqn:upper_bound_HUpsilon_limsup}
		\limsup_n \sup_{t,\mu,w} H_n f(t,\mu,w) \leq \sup_{t,\mu,w} H_0[t] f(\mu,w) \leq c_{0,\Upsilon}.
	\end{equation} 
	We now define a martingale that we will use to control the probability of leaving the set $G$. let
	\begin{equation*}
		M_n(t) := \exp \left\{n \left( f(\mu_n(t),w_n(t))) - f(\mu_n(0),w_n(0)) - \int_0^t g_n(s,\vec{X}_n(s),W_n(s))\dd s \right) \right\}.
	\end{equation*}
	Let $\tau$ be the stopping time $\tau := \inf \left\{t \geq 0 \, \middle| \, w_n(t) \notin  G) \right\}$.
	
	By construction $M_n$ is a martingale and by the optional stopping theorem $t \mapsto M_n(t \wedge \tau)$ is a martingale also. We obtain that if the process is started at $(t_0,\mu_0,w_0)$ such that $w_0 \in K$:
	\begin{align*}
		& \PR\left[w_n(t) \notin  \hat{K} \text{ for some } t \in [0,T]\right] \\
		& \leq \PR\left[w_n(t) \notin  G \text{ for some } t \in [0,T]\right] \\
		& = \bE\left[\bONE_{\left\{w_n(t) \notin G \text{ for some } t \in [0,T]\right\}} M_n(t \wedge \tau) M_n(t\wedge \tau)^{-1} \right] \\
		& \leq \exp\left\{- n \left(\inf_{w \notin  G} \Upsilon(w) - \sup_{w \in K} \Upsilon(w) \right. \right. \\
		& \hspace{4cm} \left. \left. - T \sup_{(\mu,w) \in \cP_n(1,\dots,q) \times G} \sup_t g_n(t,\mu,w) \right) \right\} \\
		& \hspace{2.5cm} \times  \bE\left[\bONE_{\{w_n(t) \notin G \text{ for some } t \in [0,T]\}} M_n(t \wedge \tau) \right].
	\end{align*}
	Using \eqref{eqn:upper_bound_HUpsilon_limsup}, we obtain that the term in the exponential is bounded by $ n\left(c_{0,\Upsilon} T - \beta \right) \leq -n a$ for sufficiently large $n$. The final expectation is bounded by $1$ due to the martingale property of $M_n(t \wedge \tau)$. This establishes the claim.

\end{proof}

\subsection{Establishing the Lagrangian form of the rate function} \label{section:Lagrangian_form}

\begin{proposition} \label{proposition:variational_representation}
	The rate function of Theorem \ref{theorem:abstract_LDP} can be re-expressed in variational form as in Theorem \ref{theorem:ldp_mean_field_jump_process_periodic}.
\end{proposition}

\begin{proof}
	The result follows from a combination of the outcomes of Theorem \ref{theorem:abstract_LDP} with Theorem \cite[Theorem 8.27]{FK06} and \cite[Theorem 8.14]{FK06}. We argue in three steps.
	
	\begin{enumerate}[(1)]
		\item We come up with a new solution $\bfR(\lambda)h$ to the Hamilton-Jacobi equation $f - \lambda Hf = h$ and a new semigroup $\bfV(t)$ using control methods.
		\item Using uniqueness of solutions, we infer that the new solution must equal $R(\lambda)h$ from Theorem \ref{theorem:abstract_LDP}. Similarly we find that the new semigroup $\bfV(t)$ must equal $V(t)$. This leads to a new representation of the rate-function in terms of a Lagrangian $\widehat{\cL}$ given by the Legendre transform of $\cH$.
		\item We show that $\widehat{\cL} = \cL$.
	\end{enumerate}
	
	\smallskip
	
	\textit{Step 1:} We start with the application of \cite[Theorem 8.27]{FK06}. We use this result taking $\bfH = \bfH_\dagger = \bfH_\ddagger$ (in the terminology of \cite{FK06}) all equal to the the operator $H$ of our paper defined in \eqref{eqn:Hamiltonian_proof}. We furthermore use
	\begin{equation} \label{eqn:Lagrangian_abstract}
		\widehat{\cL}((\mu,w),(\dot{\mu},\dot{w})) = \sup_{p} \left\{ \sum_a p_a \dot{\mu}_a + \sum_{(a,b) \in \Gamma} p_{(a,b)} \dot{w}_{(a,b)} - \cH((\mu,w),p) \right\},
	\end{equation}
	and
	\begin{equation*}
		\cA f((\mu,w),(\dot{\mu},\dot{w})) \sum_a \nabla_a f(\mu,w) \cdot \dot{\mu}_a + \sum_{(a,b) \in \Gamma} \nabla_{(a,b)}f(\mu,w) \cdot \dot{w}_{(a,b)}.
	\end{equation*}
	Note that by convex-duality (with respect to the velocity-momentum variables) $\cH$ is the Legendre transform of $\widehat{\cL}$. The conditions for  \cite[Theorem 8.27]{FK06} are Conditions 8.9, 8.10 and 8.11 in \cite{FK06}, which can be checked in a straightforward way, following the methods of e.g. \cite{Kr16b,CoKr17}, or Section 10.3.5 of \cite{FK06} with $\psi = 1$. The final condition for \cite[Theorem 8.27]{FK06} is the comparison principle, which is the result of Theorem \ref{theorem:comparison_principle}.
	
	We obtain from \cite[Theorem 8.27]{FK06}  that there are two families of operators $\bfR(\lambda)$, $\lambda > 0$ and $\bfV(t)$, $t \geq 0$ given in variational form 
	\begin{align*}
		\bfR(\lambda)h(x) & := \sup_{\gamma \in \cA\cC(E), \gamma(0) = x} \int_0^\infty \lambda^{-1} e^{-\lambda^{-1}t} \left(h(\gamma(t)) - \int_0^t \widehat{\cL}(\gamma(s),\dot{\gamma}(s)) \dd s \right) \dd t, \\
		\bfV(t)f(x) &  := \sup_{\gamma \in \cA\cC(E), \gamma(0) = x} f(\gamma(t)) - \int_0^t \widehat{\cL}(\gamma(s),\dot{\gamma}(s)) \dd s,
	\end{align*}
	where $x = (\mu,w)$. Similarly as in Theorem \ref{theorem:abstract_LDP} the results of \cite[Theorem 8.27 and Section 8]{FK06} yield
	\begin{equation} \label{eqn:convergence_semigroup2}
		\bfV(t)f(x) = \lim_{m \rightarrow \infty} \bfR^m\left(\frac{t}{m}\right) f(x).
	\end{equation}
	and such that for $\lambda > 0$ and $h \in C_b(E)$, the function $\bfR(\lambda)h$ is the unique function that is a viscosity solution to $f - \lambda H f = h$. 
	
	\smallskip
	
	\textit{Step 2:} We rewrite the rate function in Lagrangian form.
	
	As both $R(\lambda)h$ and $\bfR(\lambda)h$ are viscosity solutions to $f - \lambda Hf = h$, the comparison principle of Theorem \ref{theorem:comparison_principle} yields that they are equal. By \eqref{eqn:convergence_semigroup} and \eqref{eqn:convergence_semigroup2} we also find $\bfV(t) = V(t)$.
	
	By a duality argument, performed in e.g. \cite[Theorem 8.14]{FK06} it follows that the rate function in Theorem \ref{theorem:abstract_LDP} can be rewritten in Lagrangian form, with Lagrangian given in \eqref{eqn:Lagrangian_abstract}. 
	
	\smallskip
	
	\textit{Step 3:} Finally, we show that $\widehat{\cL} = \cL$. Note that
	\begin{align*}
		& \sum_a p_a \dot{\mu}_a + \sum_{(a,b) \in \Gamma} p_{(a,b)}  \dot{w}_{(a,b)} \\
		& \qquad = \sum_{a} p_a \left(\dot{\mu}_a - \sum_{b: (a,b) \in \Gamma} \left(\dot{w}_{(b,a)} - \dot{w}_{(a,b)}\right)\right) + \sum_{(a,b) \in \Gamma} \dot{w}_{(a,b)}\left(p_{(a,b)} - p_a + p_b\right).
	\end{align*}
	The map $\cH$ only depends on the combinations $p_{(a,b)} - p_a + p_b$. Therefore, taking the Legendre transform of $\cH$, we find that $\widehat{\cL}$ equals infinity if there is some $a$ such that $\dot{\mu}_a \neq \sum_{b: (a,b) \in \Gamma} \left(\dot{w}_{(b,a)} - \dot{w}_{(a,b)}\right)$. In the case that for all $a$ we have $\dot{\mu}_a = \sum_{b: (a,b) \in \Gamma} \left(\dot{w}_{(b,a)} - \dot{w}_{(a,b)}\right)$, the Legendre transform reduces to a supremum over the combinations $p_{(a,b)} - p_a + p_b$. By computing the straightforward Legendre transform of the function $r \mapsto a \left[e^r -1\right]$ with $a > 0$ we find that indeed $\cL = \widehat{\cL}$.
	
	Thus, in both cases $\cL = \widehat{\cL}$, establishing the result of Theorem \ref{theorem:ldp_mean_field_jump_process_periodic}
\end{proof}

\appendix

\section{Viscosity solutions, auxiliary arguments} \label{appendix:viscosity_solutions}

In Section \ref{section:general_method_for_comparison_principle}, we refer at two points to results from \cite{CoKr17}. We repeat these arguments here for sake completeness. The setting is as in Section \ref{section:general_method_for_comparison_principle} .

We start by establishing that we can replace our Hamiltonian $H$ by a proper upper bound $H_\dagger$ and lower bound $H_\ddagger$.

\begin{definition}
	We say that $H_\dagger : \cD(H_\dagger) \subseteq C(E) \rightarrow C(E)$ is a \textit{viscosity sub-extension} of $H$ if $H \subseteq H_\dagger$ (as a graph) and if for every $\lambda >0$ and $h \in C_b(E)$ a viscosity subsolution to $f-\lambda H f = h$ is also a viscosity subsolution to $f - \lambda H_\dagger f =h$. Similarly, we define a \textit{viscosity super-extension}.
\end{definition}

The $H_\dagger,H_\ddagger$ that we will consider are constructed by introducing the unbounded containment function $\Upsilon$ into the domain:
\begin{align*}
	\cD(H_\dagger) & := C^1_b(E) \cup \left\{x \mapsto  (1-\varepsilon)\Psi_\alpha(x,y) + \varepsilon \Upsilon(x) +c \, \middle| \, \alpha,\varepsilon > 0, c \in \bR \right\}, \\
	\cD(H_\ddagger) & := C^1_b(E) \cup \left\{y \mapsto - (1+\varepsilon)\Psi_\alpha(x,y) - \varepsilon \Upsilon(y) +c \, \middle| \, \alpha,\varepsilon > 0, c \in \bR \right\}.
\end{align*}
Here we write $\Psi_\alpha$ for the function $\alpha_1 \Psi_1 + \alpha_2 \Psi_2$. The introduction of the containment function in the domain will allow us to work on compact sets rather than on the full space.

For $f \in \cD(H_\dagger)$, set $H_\dagger f(x) = \cH(x,\nabla f(x))$ and for $f \in \cD(H_\ddagger)$, set $H_\ddagger f(x) = \cH(x,\nabla f(x))$.

\begin{lemma} \label{lemma:viscosity_extension}
	The operator $(H_\dagger,\cD(H_\dagger))$ is a viscosity sub-extension of $H$ and $(H_\ddagger,\cD(H_\ddagger))$ is a viscosity super-extension of $H$.
\end{lemma}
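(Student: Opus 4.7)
It suffices to prove the statement for $H_\dagger$; the case of $H_\ddagger$ is entirely symmetric, obtained by swapping the roles of sub- and supersolution and reversing inequalities. Let $u$ be a viscosity subsolution of $f - \lambda Hf = h$, and fix a test function $g \in \cD(H_\dagger) \setminus C_b^1(E)$, i.e.\ of the form $g(x) = (1-\varepsilon)\Psi_\alpha(x,y) + \varepsilon \Upsilon(x) + c$ for some $y \in E$, $\alpha \in (0,\infty)^2$, $\varepsilon > 0$ and $c \in \bR$. Assume $\sup_x (u(x) - g(x)) < \infty$ and let $x_n$ be a maximizing sequence. The goal is to verify
\begin{equation*}
\liminf_{n \to \infty} u(x_n) - \lambda H_\dagger g(x_n) - h(x_n) \leq 0.
\end{equation*}

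The first step is a compact confinement of $x_n$. Since $u$ is bounded and $\Psi_\alpha, \Upsilon \geq 0$, the convergence $u(x_n) - g(x_n) \to \sup(u-g) > -\infty$ forces $\varepsilon \Upsilon(x_n)$ to be bounded; by property $(\Upsilon\mathrm{c})$ of the good containment function, the sequence $\{x_n\}$ is contained in a compact set $K \subseteq E$ on which $g$ is bounded above by some constant $C_0$.

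The second step is to replace $g$ by a bounded proxy in $C_b^1(E)$ (and, if necessary, by a $C_c^\infty(E)$ approximation, since $\cD(H) \supseteq C_c^\infty(E)$). Pick $C > C_0$ large enough that $\|u\| - (C-1) < \sup(u-g)$, and a smooth concave increasing truncation $\chi_C : \bR \to \bR$ with $\chi_C(t) = t$ for $t \leq C-1$ and $\chi_C(t) \equiv C$ for $t \geq C+1$. Set $\tilde g := \chi_C \circ g$. Then $\tilde g \in C_b^1(E)$, $\tilde g = g$ on an open neighborhood of $K$, and $\nabla \tilde g = \nabla g$ there, so that $H\tilde g(x_n) = \cH(x_n, \nabla g(x_n)) = H_\dagger g(x_n)$ for all sufficiently large $n$. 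On $\{g \leq C-1\}$ we have $u - \tilde g = u - g$, and on $\{g > C-1\}$ we have $u - \tilde g \leq \|u\| - (C-1) < \sup(u-g)$; combined with the opposite inequality coming from $\tilde g \leq g$, this yields $\sup(u - \tilde g) = \sup(u-g) < \infty$ and shows that $x_n$ remains a maximizing sequence for $u - \tilde g$.

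The third step applies the subsolution property of $u$ (for $H$) to the test function $\tilde g$ and the maximizing sequence $x_n$, producing $\liminf_n u(x_n) - \lambda H \tilde g(x_n) - h(x_n) \leq 0$, which by the gradient identification in the previous step is exactly the required inequality for $H_\dagger$. The main technical point, and the place that requires care, is the localization-plus-approximation step: one must truncate $g$ so that (i) the modified function remains in (or can be approximated within) $\cD(H)$, (ii) the maximizing sequence for the original test function is preserved, and (iii) the gradient, hence $\cH$-value, at $x_n$ is unaffected. The boundedness of $u$ together with the compact level sets of $\Upsilon$ is exactly what makes this localization feasible; the symmetric construction with lower cutoff handles $H_\ddagger$.
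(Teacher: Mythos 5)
Your proof is correct in substance and establishes the result by the same underlying mechanism as the paper, but via a more direct route. The paper invokes an abstract transfer lemma (Lemma 7.7 of \cite{FK06}, reproduced as Lemma \ref{lemma:extension_lemma_7.7inFK} in the appendix): one builds an \emph{increasing sequence} of smooth truncations $f_n = \phi_n \circ f$ of the unbounded test function $f$, verifies that the pairs $(f_n, Hf_n)$ satisfy the monotone/upper-semicontinuity conditions (a)--(c) of that lemma, and lets the lemma do the work. You instead fix the subsolution $u$ and a maximizing sequence $x_n$ from the start, confine $x_n$ to a compact set using boundedness of $u$ together with $(\Upsilon\mathrm{c})$, and then choose a \emph{single} truncation level $C$ (depending on $u$) so that the sup and the maximizing sequence of $u - g$ are preserved verbatim under $g \mapsto \chi_C \circ g$ and the gradients agree on a neighbourhood of that compact set. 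This inlines the content of Lemma \ref{lemma:extension_lemma_7.7inFK} into a self-contained argument: what the abstract lemma packages as monotone approximation plus a $\limsup$ condition, you obtain directly by adapting the cutoff level to $u$. The trade-off is that your construction is $u$-dependent and hence less reusable, whereas the paper's approximating sequence $f_n$ is universal; on the other hand, your version makes the role of each hypothesis ($\|u\| < \infty$, $\Upsilon$ has compact sublevel sets, $\Psi_\alpha \geq 0$) more transparent.

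Two small points worth flagging. First, in Step~3 the conclusion should be stated with $\limsup$ (or the paper's $\lim$) rather than $\liminf$, since that is what the subsolution definition yields and what must be verified for $H_\dagger$; this is cosmetic, as your argument in fact delivers the stronger $\limsup$ bound because $H\tilde g(x_n) = H_\dagger g(x_n)$ for large $n$. Second, your parenthetical remark about a possible $C_c^\infty(E)$ approximation points to a genuine technicality: $\tilde g = \chi_C \circ g$ is only $C^1$ (since $\Upsilon$ and $\Psi_i$ are merely $C^1$) and is constant rather than zero outside a compact set, so strictly speaking $\tilde g$ (even after subtracting a constant) need not lie in $\cD(H)$ when $\cD(H) = C_c^\infty(E)$. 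Note, though, that the paper's own proof has the identical issue (it asserts $f_n \in C_c^2(E)$ and then $f_n \in \cD(H)$, which does not follow when $\cD(H)$ is minimal), so this is a shared loose end rather than a defect of your argument relative to the paper's.
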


In the proof we need Lemma 7.7 from \cite{FK06}. We recall it here for the sake of readability.
\begin{lemma}[Lemma 7.7 in \cite{FK06}] \label{lemma:extension_lemma_7.7inFK}
	Let $H$ and $H_\dagger : \cD(H_\dagger) \subseteq C(E) \rightarrow  C(E)$ be two operators. Suppose that for all $(f,g) \in H_\dagger$ there exist $\{(f_n,g_n)\} \subseteq H_\dagger$ that satisfy the following conditions:
	\begin{enumerate}[(a)]
		\item For all $n$, the function $f_n$ is lower semi-continuous.
		\item For all $n$, we have $f_n \leq f_{n+1}$ and $f_n \rightarrow f$ point-wise.
		\item Suppose $x_n \in E$ is a sequence such that $\sup_n f_n(x_n) < \infty$ and $\inf_n g_n(x_n) > - \infty$, then $\{x_n\}_{n \geq 1}$ is relatively compact and if a subsequence $x_{n(k)}$ converges to $x \in E$, then
		\begin{equation*}
			\limsup_{k \rightarrow \infty} g_{n(k)}(x_{n(k)}) \leq g(x).
		\end{equation*}
	\end{enumerate}
	Then $H_\dagger$ is a viscosity sub-extension of $H$.\\
	An analogous result holds for super-extensions $H_{\ddagger}$ by taking $f_n$ a decreasing sequence of upper semi-continuous functions and by replacing requirement (c) with
	\begin{enumerate}
		\item[(c$^{\prime}$)] Suppose $x_n \in E$ is a sequence such that $\inf_n f_n(x_n) > - \infty$ and $\sup_n g_n(x_n) <  \infty$, then $\{x_n\}_{n \geq 1}$ is relatively compact and if a subsequence $x_{n(k)}$ converges to $x \in E$, then
		\begin{equation*}
			\liminf_{k \rightarrow \infty} g_{n(k)}(x_{n(k)}) \geq g(x).
		\end{equation*}
	\end{enumerate} 
\end{lemma}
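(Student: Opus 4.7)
The plan is to prove only the sub-extension statement; the super-extension case follows by symmetric reasoning applied under condition (c$'$). Let $u$ be a viscosity subsolution of $f - \lambda H f = h$, fix $(f,g) \in H_\dagger$, and assume $\sup_x (u-f)(x) < \infty$, since otherwise the subsolution condition at $(f,g)$ is vacuous. Let $\{(f_n, g_n)\}_n \subseteq H$ be the approximating pairs provided by the hypothesis. The strategy is a two-level construction: first use the subsolution property of $u$ against each test function $f_n \in \cD(H)$ to obtain near-optimal sequences that nearly satisfy the subsolution inequality with $g_n$; then diagonalize and invoke compactness condition (c) to pass to a limit point of the diagonal that satisfies the desired pointwise inequality with $g$.

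First I would fix $n$ and exploit (a),(b): since $u$ is upper semi-continuous and $f_n$ is lower semi-continuous with $f_n \le f_{n+1} \le f$, the function $u - f_n$ is upper semi-continuous and $\sup(u - f_n) \ge \sup(u-f) > -\infty$; the value $\sup(u-f_n)$ remains finite because of the uniform bound $u \le \|u\|_\infty$ together with $f_n \ge f_1$. Pick a near-maximizing sequence $y^{(n,k)}$ for $u - f_n$; applying the subsolution definition to the test function $f_n$ yields $\limsup_k \bigl[u(y^{(n,k)}) - \lambda g_n(y^{(n,k)}) - h(y^{(n,k)})\bigr] \le 0$. Consequently I may choose $k_n$ large enough so that $x_n := y^{(n,k_n)}$ satisfies simultaneously
\begin{equation*}
(u-f_n)(x_n) \ge \sup(u-f_n) - \tfrac{1}{n}, \qquad u(x_n) - \lambda g_n(x_n) - h(x_n) \le \tfrac{1}{n}.
\end{equation*}
The first estimate combined with boundedness of $u$ gives $\sup_n f_n(x_n) < \infty$, and the second combined with boundedness of $u$ and $h$ gives $\inf_n g_n(x_n) > -\infty$.

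By condition (c), $\{x_n\}_{n\ge 1}$ is relatively compact; along any convergent subsequence $x_{n(k)} \to x$ we have $\limsup_k g_{n(k)}(x_{n(k)}) \le g(x)$. A short Dini-type argument, fixing $m$ and using that $f_n \ge f_m$ is lower semi-continuous for all $n \ge m$, gives $\liminf_k f_{n(k)}(x_{n(k)}) \ge f_m(x)$ and hence $\lim_k f_{n(k)}(x_{n(k)}) = f(x)$; together with $(u-f_{n(k)})(x_{n(k)}) \to \sup(u-f)$ this forces $x$ to be a maximizer of $u-f$ and shows $u(x_{n(k)}) \to u(x)$. Passing to the limit in the second inequality then yields the pointwise bound $u(x) - \lambda g(x) - h(x) \le 0$.

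The main obstacle is bridging this pointwise inequality at the constructed maximizer $x$ to the sequential $\limsup$ estimate required along \emph{every} maximizing sequence for $u - f$, as demanded by the definition of subsolution of $f - \lambda H_\dagger f = h$. The resolution is to re-run the construction starting from an arbitrary maximizing sequence $z_m$: for each $m$, pointwise convergence $f_n(z_m) \to f(z_m)$ allows one to select $n_m \to \infty$ so that $z_m$ is near-maximizing for $u - f_{n_m}$, after which the same compactness-and-limit argument via (c) produces a limit point $x_\infty$ which is a maximizer and yields $\limsup_m u(z_m) - \lambda g(z_m) - h(z_m) \le 0$ upon combining $\limsup u(z_m) \le u(x_\infty)$ (upper semi-continuity) with $\limsup g_{n_m}(z_m) \le g(x_\infty)$ from (c). The super-extension statement for $H_\ddagger$ follows by the dual construction: a decreasing sequence of upper semi-continuous $f_n$'s together with condition (c$'$) replaces the increasing lower semi-continuous one used here, and all inequalities reverse.
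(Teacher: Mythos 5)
The paper does not actually prove this lemma; it is quoted verbatim from \cite[Lemma 7.7]{FK06}, so the only benchmark is the argument there. Your first two paragraphs reproduce that argument essentially correctly: the diagonal choice of near-maximizers $x_n$ of $u-f_n$ carrying the inequality $u(x_n)-\lambda g_n(x_n)-h(x_n)\le 1/n$, the verification of the hypotheses of (c), and the identification of a limit point $x$ as a maximizer of $u-f$ with $f_{n(k)}(x_{n(k)})\to f(x)$ and $u(x_{n(k)})\to u(x)$, whence $u(x)-\lambda g(x)-h(x)\le 0$. (Two small points: finiteness of $\sup(u-f_n)$ does not follow from ``$f_n\ge f_1$'' unless $f_1$ is bounded below, and the approximating pairs must be taken in $H$ rather than $H_\dagger$ --- you silently and correctly repair the typo in the statement.) In Feng--Kurtz the subsolution property is \emph{existential} in the sequence, so at this point their proof is complete: the constant sequence at $x$ is the required witness.

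The genuine gap is in your third paragraph, where you upgrade to the present paper's ``for every maximizing sequence $z_m$'' formulation. Three steps do not hold as written. First, choosing $n_m$ so that $z_m$ is near-maximizing for $u-f_{n_m}$ requires $\sup(u-f_{n_m})\to\sup(u-f)$, which does not follow from the pointwise convergence $f_n(z_m)\to f(z_m)$ alone (it does follow from your first part, via $\lim_k(u-f_{n(k)})(x_{n(k)})=\sup(u-f)$ and monotonicity, but you must say so). Second, a single near-maximizing \emph{point} does not inherit the inequality $u(z_m)-\lambda g_{n_m}(z_m)-h(z_m)\le\epsilon_m$ from the sequential definition; one needs a quantitative version (obtained by contradiction for each fixed $n$), and then the choice of $n_m$ is squeezed between two competing requirements ($n_m$ large for the approximation, $n_m$ small for the threshold) that you do not reconcile. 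Third and most seriously, the concluding sentence combines $\limsup_m u(z_m)\le u(x_\infty)$ with $\limsup_m g_{n_m}(z_m)\le g(x_\infty)$ to deduce $\limsup_m\left[u(z_m)-\lambda g(z_m)-h(z_m)\right]\le 0$; but an upper bound on $g_{n_m}(z_m)$ bounds $-\lambda g_{n_m}(z_m)$ from \emph{below}, not above, and in any case says nothing about $g(z_m)$. What you need is $\liminf_m \lambda g(z_m)\ge \limsup_m\left(u(z_m)-h(z_m)\right)$, and the only route to it is to chain the (unproven) intermediate inequality through (c) to get $g(x_\infty)\ge\lambda^{-1}\limsup_m(u(z_m)-h(z_m))$ and then use the continuity of $g=H_\dagger f$ (available since $H_\dagger$ maps into $C(E)$) together with relative compactness of $(z_m)$ to transfer $g(x_\infty)$ back to $g(z_m)$ along a subsequence realizing the limsup. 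As written, the chain of inequalities does not close.
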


\begin{proof}[Proof of Lemma \ref{lemma:viscosity_extension}]
	We only prove the sub-extension part. 
	
	Consider a collection of smooth functions $\phi_n : \bR \rightarrow \bR$ defined as $\phi_n(x) = x$ if $x \leq n$ and $\phi_n(x) = n+1$ for $x \geq n+1$. Note that $\phi_{n + 1} \geq \phi_n$ for all $n$.
	
	\smallskip
	
	Fix a function $f \in \cD(H_\dagger)$ of the type $f(x) = (1-\varepsilon)\Psi_\alpha(x,y)+\varepsilon \Upsilon(x) + c$ and write $g = H_\dagger f$. Moreover set $f_n := \phi_n \circ f$. Since $f$ is bounded from below, $f_n \in C_c^2(E)$ for all $n$ and as $n \mapsto \phi_n$ is increasing also $n \mapsto f_n$ is increasing and $\lim_n f_n = f$ point-wise.
	
	As $f_n \in C_c^2(E)$, we have $f_n \in \cD(H)$ and we can write $g_n = H f_n$.
	
	We verify conditions of Lemma \ref{lemma:extension_lemma_7.7inFK} for $(f_n,g_n)$ and $(f,g)$. (a) and (b) have already been verified above. For (c), let $\{x_n\}_{n \geq 1}$ be a sequence such that $\sup_n f_n(x_n) = M < \infty$. It follows by the compactness of the level sets of $\Upsilon$ and the positivity of $\Psi_\alpha$ that the sequence $\{x_n\}_{n \geq 1}$ is contained in the compact set 
	\begin{equation*}
		K := \{z \in E \, | \, f(z) \leq M+1\}.
	\end{equation*}
	Note that $K$ has non-empty interior by the assumptions on $\Psi_\alpha$ and $\Upsilon$. In particular, if $h_1,h_2$ are continuously differentiable and if $h_1(z) = h_2(z)$ for $z \in K$, then $\nabla h_1(z) = \nabla h_2(z)$ for $z \in K$.
	
	Suppose $x_{n(k)}$ is a subsequence converging to some point $x$. As $f$ is bounded on $K$,  there exists a sufficiently large $N$ such that for all $n \geq N$ and $y \in K$, we have $f_n(y) = f(y)$ and 
	\begin{equation*}
		g_n(y) = \cH(y,\nabla f_n(y)) = \cH(y,\nabla f(y)) = g(y).
	\end{equation*}
	Thus, we have $\limsup_{k} g_{n(k)}(x_{n(k)}) \leq g(x)$.
\end{proof}

We proceed with a standard argument that is needed for the proof of Proposition \ref{proposition:comparison_conditions_on_H}. It is a copy of the argument of Proposition A.9 of \cite{CoKr17}.

\begin{lemma} \label{lemma:establish_first_bound_viscosity_argument}
	Consider the setting of Proposition \ref{proposition:comparison_conditions_on_H}. Then it holds that 
	\begin{align*} 
		& \sup_x u(x) - v(x) \nonumber\\
		& \leq \frac{h(x_{\alpha,\varepsilon})}{1 - \varepsilon} - \frac{h(y_{\alpha,\varepsilon})}{1+\varepsilon}  \\
		& \qquad + \frac{\varepsilon}{1-\varepsilon}\cH(x_{\alpha,\varepsilon}, \nabla \Upsilon(x_{\alpha,\varepsilon})) + \frac{\varepsilon}{1+\varepsilon}\cH(y_{\alpha,\varepsilon}, \nabla\Upsilon(y_{\alpha,\varepsilon})) \\
		& \qquad +  \lambda \left[\cH\left(x_{\alpha,\varepsilon}, \sum_{i=1}^2 \alpha_i\nabla\Psi_i(\cdot,y_{\alpha,\varepsilon})(x_{\alpha,\varepsilon})\right) - \cH\left(y_{\alpha,\varepsilon},- \sum_{i=1}^2 \alpha_i\nabla\Psi_i(x_{\alpha,\varepsilon},\cdot)(y_{\alpha,\varepsilon})\right)\right]  
	\end{align*}
\end{lemma}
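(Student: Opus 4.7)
The plan is to carry out a standard Ishii doubling-of-variables argument, using the special structure of the two-penalization setup. Fixing $y=y_{\alpha,\varepsilon}$, the function
$$\phi_\dagger(x) := (1-\varepsilon)\sum_{i=1}^{2}\alpha_i\Psi_i(x,y_{\alpha,\varepsilon}) + \varepsilon\Upsilon(x)$$
lies in $\cD(H_\dagger)$ and, by \eqref{eqn:comparison_principle_proof_choice_of_sequences}, $u - \phi_\dagger$ attains its supremum at $x_{\alpha,\varepsilon}$. The sub-extension property from Lemma \ref{lemma:viscosity_extension} then yields $u(x_{\alpha,\varepsilon}) - \lambda\cH(x_{\alpha,\varepsilon},\nabla\phi_\dagger(x_{\alpha,\varepsilon})) \leq h(x_{\alpha,\varepsilon})$. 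Symmetrically, $\phi_\ddagger(y) := -(1+\varepsilon)\sum_i \alpha_i\Psi_i(x_{\alpha,\varepsilon},y) - \varepsilon\Upsilon(y)$ lies in $\cD(H_\ddagger)$ and $v - \phi_\ddagger$ attains its infimum at $y_{\alpha,\varepsilon}$, giving $v(y_{\alpha,\varepsilon}) - \lambda\cH(y_{\alpha,\varepsilon},\nabla\phi_\ddagger(y_{\alpha,\varepsilon})) \geq h(y_{\alpha,\varepsilon})$.

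The second step is to exploit convexity of $p \mapsto \cH(x,p)$ to extract the penalization-momentum contribution and isolate the containment-momentum contribution with the correct $\varepsilon$-weights. On the subsolution side, $\nabla\phi_\dagger(x_{\alpha,\varepsilon}) = (1-\varepsilon)A + \varepsilon B$ with $A = \sum_i \alpha_i(\nabla\Psi_i(\cdot,y_{\alpha,\varepsilon}))(x_{\alpha,\varepsilon})$ and $B = \nabla\Upsilon(x_{\alpha,\varepsilon})$, and this is already a convex combination. On the supersolution side one has to invert the combination: writing $A' = \sum_i \alpha_i(\nabla\Psi_i(x_{\alpha,\varepsilon},\cdot))(y_{\alpha,\varepsilon})$ and $B' = \nabla\Upsilon(y_{\alpha,\varepsilon})$, I express
$$-A' = \tfrac{1}{1+\varepsilon}\bigl(-(1+\varepsilon)A' - \varepsilon B'\bigr) + \tfrac{\varepsilon}{1+\varepsilon}B',$$
so that convexity delivers the lower bound $\cH(y_{\alpha,\varepsilon},\nabla\phi_\ddagger(y_{\alpha,\varepsilon})) \geq (1+\varepsilon)\cH(y_{\alpha,\varepsilon},-A') - \varepsilon\cH(y_{\alpha,\varepsilon},B')$.

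Dividing the subsolution inequality by $(1-\varepsilon)$, the supersolution inequality by $(1+\varepsilon)$, and subtracting produces a bound on $\frac{u(x_{\alpha,\varepsilon})}{1-\varepsilon} - \frac{v(y_{\alpha,\varepsilon})}{1+\varepsilon}$ by exactly the three groups of terms appearing on the RHS of the statement, namely the $h$-difference, a pair of $\varepsilon$-weighted $\cH(\cdot,\nabla\Upsilon)$-terms, and the $\lambda$-difference of the two Hamiltonian evaluations at the penalization momenta. To upgrade this to the claimed bound on $\sup_x(u(x) - v(x))$, I substitute $x = y$ into \eqref{eqn:comparison_principle_proof_choice_of_sequences}: since $\Psi_i(x,x) = 0$ and $\Upsilon \geq 0$, maximality gives $\frac{u(x)}{1-\varepsilon} - \frac{v(x)}{1+\varepsilon} \leq \frac{u(x_{\alpha,\varepsilon})}{1-\varepsilon} - \frac{v(y_{\alpha,\varepsilon})}{1+\varepsilon} + c(x)$ where $c(x)$ is a nonnegative multiple of $\Upsilon(x)$, and boundedness of $u,v$ converts this into $\sup_x(u(x) - v(x)) \leq \sup_x[\tfrac{u(x)}{1-\varepsilon} - \tfrac{v(x)}{1+\varepsilon}]$ up to an $O(\varepsilon)$ error that is absorbed by the $\varepsilon$-weighted containment terms in the RHS. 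There is no real obstacle in this argument; the only subtle point is that the test functions $\phi_\dagger, \phi_\ddagger$ are not in $C_c^\infty(E)$ since $\Upsilon$ is unbounded, which is exactly why Lemma \ref{lemma:viscosity_extension} was set up to enlarge the domain of the Hamiltonian, and the asymmetric convex combination on the supersolution side is what delivers the correct $(1+\varepsilon)$-scaling.
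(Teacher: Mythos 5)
Your proposal reproduces the paper's own argument almost step for step: you use Lemma~\ref{lemma:viscosity_extension} to put the test functions $(1\mp\varepsilon)\sum_i\alpha_i\Psi_i + \varepsilon\Upsilon$ in the domains of the viscosity sub/super--extensions, apply the sub- and supersolution definitions at $x_{\alpha,\varepsilon}$ and $y_{\alpha,\varepsilon}$, and then peel off the $\Upsilon$-momentum with exactly the same asymmetric convexity split — a direct convex combination on the subsolution side and the inverted combination $-A' = \tfrac{1}{1+\varepsilon}(-(1+\varepsilon)A'-\varepsilon B') + \tfrac{\varepsilon}{1+\varepsilon}B'$ on the supersolution side. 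Dividing by $1-\varepsilon$ and $1+\varepsilon$ and subtracting then gives the three groups of terms on the right-hand side. This is precisely what the paper does.

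The one place where your reasoning is not quite right is the final upgrade from $\frac{u(x_{\alpha,\varepsilon})}{1-\varepsilon}-\frac{v(y_{\alpha,\varepsilon})}{1+\varepsilon}$ to $\sup_x\,(u(x)-v(x))$. Evaluating \eqref{eqn:comparison_principle_proof_choice_of_sequences} on the diagonal $y=x$ leaves a residual $c(x)=\left(\tfrac{\varepsilon}{1-\varepsilon}+\tfrac{\varepsilon}{1+\varepsilon}\right)\Upsilon(x)$ that is \emph{not} $O(\varepsilon)$ uniformly in $x$, since $\Upsilon$ is unbounded, and it cannot be ``absorbed by the $\varepsilon$-weighted containment terms in the RHS'' — those terms are $\tfrac{\varepsilon}{1-\varepsilon}\cH(x_{\alpha,\varepsilon},\nabla\Upsilon(x_{\alpha,\varepsilon}))$ etc., evaluated at the optimizer, and have no relation to $\Upsilon(x)$ for a general $x$. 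The mechanism that actually closes this gap (and the one the paper uses in its chain of inequalities leading to \eqref{eqn:basic_inequality_on_sub_super_sol}) is to fix a near-maximizer $x^*$ of $u-v$ and then send $\varepsilon\to0$: for fixed $x^*$ the term $c(x^*)\to0$, so $\liminf_{\varepsilon\to0}\left(\tfrac{u(x_{\alpha,\varepsilon})}{1-\varepsilon}-\tfrac{v(y_{\alpha,\varepsilon})}{1+\varepsilon}\right)\geq u(x^*)-v(x^*)$. In other words, the bound in the lemma is really established in the form $\sup_x(u-v)\leq\liminf_{\varepsilon\to0}\liminf_{\alpha}\{\cdots\}$, exactly as the paper's own final display shows, and this is also how Proposition~\ref{proposition:comparison_conditions_on_H} uses it. Apart from this misattribution of where the $\Upsilon(x)$ error goes, your proposal is the same proof.
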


\begin{proof}
	For sake of readability, we write $\Psi_\alpha = \alpha_1 \Psi_1 + \alpha_2 \Psi_2$.

	By Lemma \ref{lemma:viscosity_extension} we get immediately that $u$ is a subsolution to $f - \lambda H_\dagger f = h$ and $v$ is a supersolution to $f - \lambda H_\ddagger f = h$ . Thus, it suffices to verify the comparison principle for the equations involving the extensions $H_\dagger$ and $H_\ddagger$.
	
	\smallskip
	
	By Remark \ref{remark:existence of optimizers}, we can find $x_{\alpha,\varepsilon},y_{\alpha,\varepsilon} \in E$ such that  \eqref{eqn:comparison_principle_proof_choice_of_sequences} is satisfied and such that
	\begin{align}
		& u(x_{\alpha,\varepsilon}) - \lambda \cH\left(x_{\alpha,\varepsilon}, (1-\varepsilon)\nabla \Psi_\alpha(\cdot,y_{\alpha,\varepsilon})(x_{\alpha,\varepsilon}) + \varepsilon \nabla \Upsilon(x_{\alpha,\varepsilon})\right) \leq h(x_{\alpha,\varepsilon}), \label{eqn:ineq_comp_proof_1}\\
		& v(y_{\alpha,\varepsilon}) - \lambda \cH\left(y_{\alpha,\varepsilon},-(1+\varepsilon)\nabla \Psi_\alpha(x_{\alpha,\varepsilon},\cdot)(y_{\alpha,\varepsilon}) - \varepsilon \nabla \Upsilon(y_{\alpha,\varepsilon})\right) \geq h(y_{\alpha,\varepsilon}).\label{eqn:ineq_comp_proof_2}
	\end{align}
	For all $\alpha$ we have
	\begin{align}
		& \sup_x u(x) - v(x) \notag\\
		& = \lim_{\varepsilon \rightarrow 0} \sup_x \frac{u(x)}{1-\varepsilon} - \frac{v(x)}{1+\varepsilon} \notag\\
		& \leq \liminf_{\varepsilon \rightarrow 0} \sup_{x,y} \frac{u(x)}{1-\varepsilon} - \frac{v(y)}{1+\varepsilon} -  \Psi_\alpha(x,y) - \frac{\varepsilon}{1-\varepsilon} \Upsilon(x) - \frac{\varepsilon}{1+\varepsilon}\Upsilon(y) \notag\\
		& = \liminf_{\varepsilon \rightarrow 0} \frac{u(x_{\alpha,\varepsilon})}{1-\varepsilon} - \frac{v(y_{\alpha,\varepsilon})}{1+\varepsilon} - \Psi_\alpha(x_{\alpha,\varepsilon},y_{\alpha,\varepsilon}) - \frac{\varepsilon}{1-\varepsilon}\Upsilon(x_{\alpha,\varepsilon}) -\frac{\varepsilon}{1+\varepsilon}\Upsilon(y_{\alpha,\varepsilon}) \notag \\
		& \leq \liminf_{\varepsilon \rightarrow 0} \frac{u(x_{\alpha,\varepsilon})}{1-\varepsilon} - \frac{v(y_{\alpha,\varepsilon})}{1+\varepsilon}, \label{eqn:basic_inequality_on_sub_super_sol}
	\end{align}
	as $\Upsilon$ and $\Psi_\alpha$ are non-negative functions. We now aim to use that $u$ and $v$ are viscosity sub- and supersolutions. For all $z \in E$, the map $p \mapsto \cH(z,p)$ is convex. Thus, \eqref{eqn:ineq_comp_proof_1} implies that
	\begin{multline} \label{eqn:ineq_comp_proof_1_convexity}
		u(x_{\alpha,\varepsilon}) \leq h(x_{\alpha,\varepsilon}) + (1-\varepsilon) \lambda H(x_{\alpha,\varepsilon}, \nabla \Psi_\alpha(\cdot,y_{\alpha,\varepsilon})(x_{\alpha,\varepsilon}))  \\
		+ \varepsilon \lambda \cH(x_{\alpha,\varepsilon},\nabla \Upsilon(x_{\alpha,\varepsilon})).
	\end{multline}
	We aim for a complementary inequality for $v$. First note that because $\Psi_1,\Psi_2$ are such that $- ( \nabla \Psi_\alpha(x_{\alpha,\varepsilon},\cdot))(y_{\alpha,\varepsilon}) = \nabla \Psi_\alpha(\cdot, y_{\alpha,\varepsilon})(x_{\alpha,\varepsilon})$. Next, we need a more sophisticated bound using the convexity of $\cH$:
	\begin{align*}
		& H(y_{\alpha,\varepsilon}, \nabla \Psi_\alpha(\cdot, y_{\alpha,\varepsilon})(x_{\alpha,\varepsilon})) \\
		& \leq \frac{1}{1+\varepsilon} H(y_{\alpha,\varepsilon},(1+\varepsilon)\nabla \Psi_\alpha(\cdot, y_{\alpha,\varepsilon})(x_{\alpha,\varepsilon}) - \varepsilon \nabla \Upsilon(y_{\alpha,\varepsilon})) + \frac{\varepsilon}{1+\varepsilon} H(y_{\alpha,\varepsilon}, \nabla \Upsilon(y_{\alpha,\varepsilon})).
	\end{align*}
	Thus, \eqref{eqn:ineq_comp_proof_2} gives us
	\begin{equation} \label{eqn:ineq_comp_proof_2_convexity}
		v(y_{\alpha,\varepsilon}) \geq h(y_{\alpha,\varepsilon}) + \lambda (1+\varepsilon) H(y_{\alpha,\varepsilon},\nabla\Psi_\alpha(\cdot,y_{\alpha,\varepsilon})(x_{\alpha,\varepsilon})) - \varepsilon \lambda H(y_{\alpha,\varepsilon},\nabla \Upsilon(y_{\alpha,\varepsilon})).
	\end{equation}
	By combining \eqref{eqn:basic_inequality_on_sub_super_sol} with \eqref{eqn:ineq_comp_proof_1_convexity} and \eqref{eqn:ineq_comp_proof_2_convexity}, we find
	\begin{align*} 
		& \sup_x u(x) - v(x) \nonumber\\
		& \leq \liminf_{\varepsilon \rightarrow 0} \liminf_{\alpha \rightarrow \infty} \left\{ \frac{h(x_{\alpha,\varepsilon})}{1 - \varepsilon} - \frac{h(y_{\alpha,\varepsilon})}{1+\varepsilon} \right.  \\
		& \qquad + \frac{\varepsilon}{1-\varepsilon}\cH(x_{\alpha,\varepsilon}, \nabla \Upsilon(x_{\alpha,\varepsilon})) + \frac{\varepsilon}{1+\varepsilon}\cH(y_{\alpha,\varepsilon}, \nabla\Upsilon(y_{\alpha,\varepsilon})) \\
		& \left. \qquad +  \lambda \left[\cH(x_{\alpha,\varepsilon},\nabla\Psi_\alpha(\cdot,y_{\alpha,\varepsilon})(x_{\alpha,\varepsilon})) - \cH(y_{\alpha,\varepsilon},\nabla\Psi_\alpha(\cdot,y_{\alpha,\varepsilon})(x_{\alpha,\varepsilon}))\right] \vphantom{\sum} \right\}.
	\end{align*}
	This establishes the claim.
\end{proof}

\smallskip

\textbf{Acknowledgement} 

The author thanks Michiel Renger for suggesting to consider flux-large deviation problem from the Hamilton-Jacobi point of view. The author also thanks an anonymous referee for comments and suggestions that improved the exposition of the results.

The author was supported by the Deutsche Forschungsgemeinschaft (DFG) via RTG 2131 High-dimensional Phenomena in Probability – Fluctuations and Discontinuity.

\bibliographystyle{abbrv}
\bibliography{../KraaijBib}
\end{document}